\newtheorem{theorem}{Theorem}[section]
\newtheorem{lem}[theorem]{Lemma}
\newtheorem{prop}[theorem]{Proposition}
\newtheorem{cor}[theorem]{Corollary}
\theoremstyle{definition}
\newtheorem{example}[theorem]{Example}
\theoremstyle{remark}
\newtheorem{rem}[theorem]{Remark}
\newcommand{\QED}{\ifhmode\unskip\nobreak\fi\quad {\rm Q.E.D.}} 
\newcommand{\cL}{\mathcal{L}}
\newcommand{\RR}{\mathbb{R}}
\newcommand{\de}{{\rm de}}
\title{Brownian motion tree models are toric}
\author[]{Bernd Sturmfels}
\address{Max Planck-Institute for Mathematics in the Sciences, Leipzig, Germany}
\email{bernd@mis.mpg.de}
\author[]{Caroline Uhler}
\address{Laboratory for Information and Decision Systems, and Institute for Data, 
Systems, and Society, Massachusetts Institute of Technology, Cambridge, MA, USA}
\email{cuhler@mit.edu}
\author[]{Piotr Zwiernik}
\address{Dept.~of Economics and Business, Universitat Pompeu Fabra, Barcelona, Spain}
\email{piotr.zwiernik@upf.edu}
\thanks{BS acknowledges funding from the Einstein Foundation Berlin
and the NSF (DMS-1419018).
CU was partially supported by NSF (DMS-1651995), ONR (N00014-17-1-2147 and N00014-18-1-2765), IBM, and a Sloan Fellowship. 
PZ was supported by the Spanish Ministry of Economy and Competitiveness (MTM2015-67304-P), a Beatriu de Pin\'{o}s fellowship (2016 BP 00002), and 
the Ayudas Fundaci\'on BBVA.
This project started  in December 2018 in Bristol,
at the workshop {\em Advances in Applied Algebraic Geometry}.
Many thanks to Fatemeh Mohammadi for organizing it}
\begin{document}

\begin{abstract}
Felsenstein's classical model for Gaussian distributions on
a phylogenetic tree is 
shown to be a toric variety in the space of concentration matrices.
We present an exact semialgebraic characterization  of this model,
 and we demonstrate
how the toric structure leads to exact methods for maximum likelihood estimation. Our results also give new insights into the geometry of ultrametric~matrices.
\end{abstract}

\maketitle

\section{Introduction}

Brownian motion tree models are classical statistical models for phylogenetic trees.
They were  introduced by Felsenstein \cite{felsenstein_maximum-likelihood_1973}
to examine continuous measurements of phenotypes in evolutionary biology.
The vertices of the tree represent real-valued random variables,
whose joint distribution obeys a Gaussian law.

Let $\tilde T$ be a tree with $n+1$ leaves, labelled $0,1,\ldots,n$,
and with no vertices of degree two. Let
$T$ be the rooted tree obtained from $\tilde T$ by directing all edges away from $0$.
The set $V$ of non-root vertices of $T$ is in natural bijection with the set of edges of~$T$.
  A vertex $u\in V$ is a \emph{descendant} of $v\in V$ if there is a directed path from $v$ to $u$. 
   The set of all leaves of $T$ that are   descendants of $v$ is denoted by $\de(v)$.
   We fix a total order on $V$ such that  $u\leq v$ if $\de(u)\subseteq  \de(v)$.  Given $u,v\in V$, we write $w={\rm lca}(u,v)$ for
   their  most recent common ancestor.
Figure~\ref{fig:running0} shows our running example.

\begin{figure}[htp!] \quad
\includegraphics[height=4.7cm]{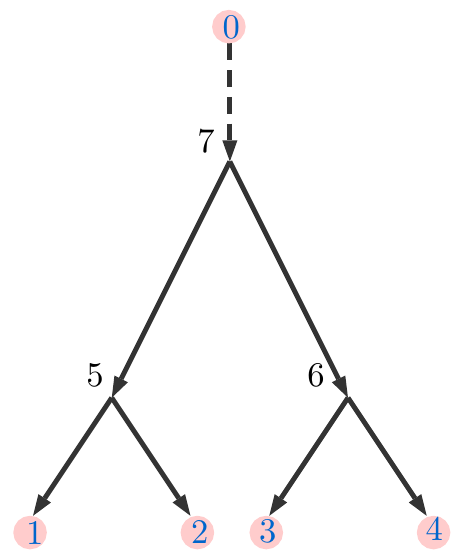} \qquad 
\includegraphics[height=3cm]{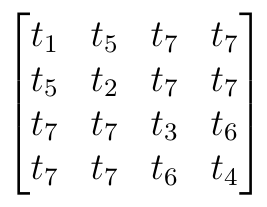} \qquad
\includegraphics[height=4.7cm]{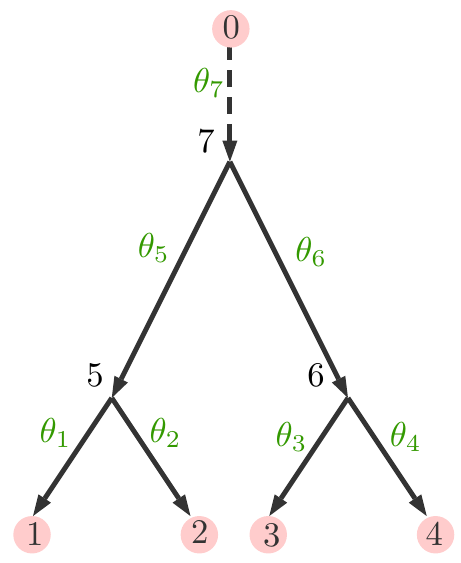}
	 \vspace{-0.13in}
	\caption{A tree $T$ with $n{=}4$ leaves, $|V|{=}7$ edges,
	and its matrix representation.}\label{fig:running0}
\end{figure}	

In the space $\mathbb S^n$ of symmetric $n\times n$ matrices
$\Sigma=(\sigma_{ij})$ we consider the subspace
$$
\mathcal L_T\;=\;\bigl\{\,\Sigma\in \mathbb S^n:\; \sigma_{ij}=\sigma_{kl} \,
\mbox{ if } \, {\rm lca}(i,j) = {\rm lca}(k,l) \,\bigr\}.
$$
Using parameters $(t_v: v\in V)$, 
the matrices in $\mathcal{L}_T$ satisfy
$\sigma_{ij} = t_v$ for $v = {\rm lca}(i,j)$.
This furnishes a representation of the tree $T$ by a matrix, as shown  in
Figure~\ref{fig:running0}.

We are interested in Gaussian distributions on $\RR^n$
with covariance matrix in $\mathcal{L}_T$.
Their concentration matrices $ K =(\kappa_{ij})$ form the
$|V|$-dimensional algebraic variety
$$ \mathcal{L}_T^{-1} \,\, = \,\, \bigl\{ \,K=\Sigma^{-1} \,: \, \Sigma \in \mathcal{L}_T \,\bigr\}
\quad \subset \,\,\, \mathbb{S}^n.$$
We identify $\mathcal{L}_T^{-1}$ with its Zariski closure in the 
projective space $\mathbb{P}(\mathbb{S}^n) \simeq
\mathbb{P}^{\binom{n+1}{2}-1}$.
In this paper we show that  the variety $\mathcal{L}_T^{-1}$ is linearly isomorphic to
a  toric variety in $ \mathbb{P}^{\binom{n+1}{2}-1}$.
In tropical geometry \cite[Remark 4.3.11]{MS}
 and algebraic combinatorics \cite[Theorem 4.6]{lara},
 one associates
 a toric ideal $I_{\tilde T}$ with the unrooted tree
 ${\tilde T}$ as follows.
 The ideal $I_{\tilde T}$ has  the quadratic generators
 $p_{ik} p_{jl} - p_{il} p_{jk}$ where 
 $\{i,j\}$ and $\{k,l\}$ are cherries in the induced
  $4$-leaf subtree on any quadruple $i,j,k,l \in \{0,1,\ldots,n\}$.
    
To reveal the toric structure,  we introduce a 
change of coordinates in~$\mathbb{S}^n$ as follows:
 \begin{equation}
 \label{eq:pcoord} \begin{matrix} 
 p_{ij} &=& - \kappa_{ij} \quad &\quad {\rm for} \,\,1 \leq i < j \leq n, \\
 p_{0i} & = & \sum_{j=1}^n \kappa_{ij} &  {\rm for} \,\,1 \leq i \leq n .
 \end{matrix}
 \end{equation}
With this, the concentration matrix $K = (\kappa_{ij})$
is the {\em reduced Laplacian} of the complete graph on $n{+}1$ vertices
with edge labels $p_{ij}$.  See \cite[Example 4.9]{MSUZ},
where the matrix for $n=3$ is shown in equation (4.6).
Here is the same scenario for $n=4$:

\begin{example} \label{ex:4k4} We fix coordinates
$\,p_{01},p_{02},\ldots,p_{34}\,$ on $\,\mathbb{P}(\mathbb{S}^4) = \mathbb{P}^9\,$
by setting
$$ \begin{small} K \,\, = \,\,\begin{bmatrix} 
          p_{01} {+} p_{12} {+} p_{13} {+} p_{14}  \! &  -p_{12}  &  -p_{13}  &  -p_{14} \\
          -p_{12}  &  \! p_{02} {+} p_{12} {+} p_{23} {+} p_{24} \! &  -p_{23}  &  -p_{24} \\
          -p_{13}  &  -p_{23}  & \!  p_{03} {+} p_{13} {+} p_{23} {+} p_{34}\!  &  -p_{34} \\
          -p_{14}  &  -p_{24}  &  -p_{34}  & \!  p_{04} {+} p_{14} {+} p_{24} {+} p_{34} 
          \end{bmatrix}\! .
          \end{small}
          $$
Fix the tree $T$ in Figure~\ref{fig:running0}.
The $6$-dimensional toric variety $\mathcal{L}_T^{-1}$ in 
$\mathbb{P}^9$ is defined~by
$$ I_{\tilde T} \, =\,
\langle \,
p_{01} p_{23} - p_{02} p_{13} ,\,
p_{01} p_{24} - p_{02} p_{14}  ,\,
p_{03} p_{14} - p_{04} p_{13} ,\,
p_{03} p_{24} - p_{04} p_{23} ,\,
p_{13} p_{24} - p_{14} p_{23} \,
\rangle.
$$
These quadrics vanish for the inverse of any matrix with the structure in
Figure~\ref{fig:running0}.~\qed
\end{example}

The title of this paper is an abridged version of the following statement:

\begin{theorem} \label{thm:main}
The variety $\mathcal{L}^{-1}_T$ of concentration matrices in the
Brownian motion tree model,  in coordinates (\ref{eq:pcoord}),
 coincides with the toric variety defined by the ideal~$ I_{\tilde T}$.
\end{theorem}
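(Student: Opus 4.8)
The plan is to prove the inclusion $\mathcal{L}^{-1}_T\subseteq V(I_{\tilde T})$ and then conclude equality from a dimension count. Both sides are irreducible of dimension $|V|-1$: the rational map $[\Sigma]\mapsto[\mathrm{adj}(\Sigma)]$ is injective wherever defined, so $\mathcal{L}^{-1}_T$, being the closure of its image on $\mathbb{P}(\mathcal{L}_T)\cong\mathbb{P}^{|V|-1}$, is irreducible of dimension $|V|-1$; and $V(I_{\tilde T})$ is the (irreducible) toric variety parametrized by the path monomials $p_{ij}=\prod_{e\in\mathrm{path}_{\tilde T}(i,j)}y_e$, whose image has dimension equal to the rank of the path--incidence vectors, which is $|E(\tilde T)|=|V|$ because a tree metric determines its edge lengths (cf.\ \cite[Remark 4.3.11]{MS}, \cite[Theorem 4.6]{lara}). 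So it remains to show that the binomial generators of $I_{\tilde T}$ vanish on $\mathcal{L}^{-1}_T$.

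For this I would first put the $p$--coordinates of $K=\Sigma^{-1}$, for generic $\Sigma\in\mathcal{L}_T$, into closed form. Realize $\Sigma$ as the leaf covariance of the Brownian motion model with edge lengths $\ell_e>0$ and set $c_e=1/\ell_e$. The concentration matrix $\Theta$ of the Gaussian vector indexed by all of $V$ is the Laplacian of $\tilde T$ with edge weights $c_e$, grounded at the root $0$, and $\Sigma$ is its principal submatrix on the leaves $\{1,\dots,n\}$. Schur complementing out the internal vertices gives $K=\Theta_{LL}-\Theta_{LI}M\,\Theta_{IL}$ with $M=(\Theta_{II})^{-1}$, and since each leaf $i$ is adjacent in $\tilde T$ only to its unique internal neighbour $\pi(i)$ this yields $\kappa_{ij}=-c_{e_i}c_{e_j}M_{\pi(i)\pi(j)}$ for $i\neq j$. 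The crux is the identity
\[
\textstyle\sum_{j=0}^{n} c_{e_j}\,M_{\pi(i)\pi(j)}\;=\;1\qquad(1\le i\le n),
\]
which follows from $M\Theta_{II}=\mathrm{Id}$ together with the observation that, for each internal vertex $a$, the sum of the conductances of the pendant edges at $a$ equals the $a$-th row sum of $\Theta_{II}$ (the ``missing'' Laplacian row sum is absorbed by the edge to the grounded vertex $0$). Substituting into $p_{0i}=\sum_{j=1}^n\kappa_{ij}$ produces the uniform formula
\[
p_{ij}\;=\;c_{e_i}\,c_{e_j}\,M_{\pi(i)\pi(j)}\qquad\text{for all }0\le i<j\le n .
\]

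Now $M$ is the covariance of the internal variables conditioned on the leaves, hence the covariance of a Gaussian vector that is Markov with respect to the subgraph that $\tilde T$ induces on its internal vertices --- which is again a tree. Therefore correlations multiply along paths: $M_{\pi(i)\pi(j)}=\sqrt{M_{\pi(i)\pi(i)}M_{\pi(j)\pi(j)}}\;\prod_{e}\rho_e$, the product running over the internal edges $e$ of $\mathrm{path}_{\tilde T}(i,j)$. Thus $p_{ij}=\alpha_i\alpha_j\prod_e\rho_e$ with $\alpha_i:=c_{e_i}\sqrt{M_{\pi(i)\pi(i)}}$, i.e.\ $p_{ij}$ is, after a harmless reparametrization, exactly the path-monomial parametrization of $I_{\tilde T}$. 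The binomial generators then vanish by a bookkeeping argument: for a quartet $ij|kl$ of $\tilde T$ the four internal neighbours $\pi(i),\pi(j),\pi(k),\pi(l)$ realize a quartet of the same topology in the internal tree (the degenerate cases, where $\pi(i)=\pi(j)$ or $\pi(k)=\pi(l)$, only help), so the multisets of internal edges $\mathrm{path}(\pi(i),\pi(k))\uplus\mathrm{path}(\pi(j),\pi(l))$ and $\mathrm{path}(\pi(i),\pi(l))\uplus\mathrm{path}(\pi(j),\pi(k))$ coincide; since the factors $c_{e_i}c_{e_j}c_{e_k}c_{e_l}$ and the square-root factors also agree, $p_{ik}p_{jl}=p_{il}p_{jk}$. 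This establishes $\mathcal{L}^{-1}_T\subseteq V(I_{\tilde T})$, and with the dimension count above, equality.

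I expect the main obstacle to be the closed-form computation in the second step, and within it the row-sum identity: this is what forces the grounded root $0$ to sit symmetrically in the toric picture even though it enters the parametrization asymmetrically (as the grounded vertex). One must also dispatch the degenerate quartets and the small cases $n\le 2$ separately, but these are routine.
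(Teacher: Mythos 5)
Your argument is correct, but it takes a genuinely different route from the paper's. The paper stays with the adjugate entries $P_{ij}(t)$: it factors them into the determinants $D_k$ and $E_{uv}$ attached to subtrees and edges, matches their initial monomials (in a reverse lexicographic order) with the monomial parametrization (\ref{eq:toricpara}), and concludes via a Khovanskii-basis argument that the kernel of $p_{ij}\mapsto P_{ij}(t)$ is exactly $I_{\tilde T}$ for binary trees, handling non-binary trees afterwards through (\ref{eq:identityideals}) and the Nullstellensatz. You instead prove only the inclusion $\mathcal{L}_T^{-1}\subseteq V(I_{\tilde T})$ and finish by a dimension count against the irreducible toric variety of the path monomials. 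Your inclusion step is sound: the concentration matrix of the full tree vector is the Laplacian of $\tilde T$ grounded at $0$, the Schur complement gives $\kappa_{ij}=-c_{e_i}c_{e_j}M_{\pi(i)\pi(j)}$, your row-sum identity is correct (the row sums of $\Theta_{II}$ are exactly the pendant conductances, so $\Theta_{II}\mathbf{1}=w$ and $Mw=\mathbf{1}$), whence $p_{ij}=c_{e_i}c_{e_j}M_{\pi(i)\pi(j)}$ uniformly, including $j=0$; and since the conditional concentration matrix of the internal nodes is $\Theta_{II}$, supported on the induced internal tree, the Gaussian Markov property makes correlations multiply along paths, exhibiting the $p_{ij}$ as path monomials, so the quartet binomials vanish. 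What your route buys: one argument for binary and non-binary trees alike (no Nullstellensatz step), and a probabilistically transparent closed form for the $p_{ij}$. What it costs: the conclusion is set-theoretic and leans on the cited fact that the quadrics generate the prime toric ideal of the path parametrization, of dimension $|E(\tilde T)|$ --- though the paper relies on the same identification when it declares $I_{\tilde T}$ to be the kernel of (\ref{eq:toricpara}) --- and you lose the by-products of the paper's leading-term analysis (the explicit factorizations and the Khovanskii-basis property, reused in Example \ref{ex:running2}). Two small points to tighten: $\Sigma$ is the leaf principal submatrix of $\Theta^{-1}$, not of $\Theta$ (your Schur-complement formula is the intended statement), and you should say explicitly that it suffices to verify the binomials on the Zariski-dense subset of $\mathcal{L}_T$ with positive edge lengths, vanishing being a closed condition; likewise the adjugate map is only generically injective, which is all the dimension count needs.
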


The proof of this theorem will be given in Section~\ref{sec:toric}.
First, however, in Section~\ref{sec:BMT}, we offer an
introduction to the statistical model and its phylogenetic applications.
Our statistical models correspond to semialgebraic subsets of 
$\mathcal{L}_T$ or $\mathcal{L}^{-1}_T$. We are interested in two
subsets of $\mathcal{L}_T$, namely 
 the {\em spectrahedron} $\mathcal L_T \,\cap \,\mathbb S_+^n$, obtained by intersection with
 the cone $\,\mathbb S_+^n\,$ of positive definite matrices, and the  polyhedral cone 
 $$
\mathcal L_{T,\geq}\;=\;\
\bigl\{\,\Sigma\in \mathcal L_T:\;
0 \leq \sigma_{ij}\leq \sigma_{kl}\,\mbox{ whenever} \,{\rm lca}(i,j) \leq{\rm lca}(k,l)
\bigr\}.
$$ 
We shall see that $\mathcal L_{T,\geq}$ is a simplicial cone, contained in
the spectrahedron $\mathcal L_T\cap \mathbb S_+^n$.

Matrices in  $\mathcal L_{T,\geq}$ play an important role in statistics. By Proposition~3.14 in \cite{dellacherie2014inverse}, every matrix $\Sigma$ in  $\mathcal L_{T,\geq}$ is an 
{\em ultrametric matrix} in $\mathbb{S}^n$, i.e.~it satisfies
$\,\sigma_{ij}  \geq \min\{\sigma_{ik},\sigma_{jk}\} \geq 0 \,$
for all $i,j,k$.
By Theorem~3.16, every ultrametric matrix lies in $\mathcal L_{T,\geq}$ for some tree $T$. Ultrametric matrices appear in the potential theory of finite state Markov chains, which 
is the context of \cite{dellacherie2014inverse}. 
Our motivation came from 
phylogenetics \cite{felsenstein_maximum-likelihood_1973}
and Gaussian maximum likelihood estimation~\cite{ZUR14}.

Every matrix $\Sigma$ in  $\mathbb S_+^n$ represents a Gaussian distribution on $\RR^n$.
Both $\mathcal L_T\cap \mathbb S_+^n$  and $\mathcal L_{T,\geq}$ belong to the class
 of linear Gaussian covariance models \cite{andersonLinearCovariance,ZUR14}.

The main result of this paper is  Theorem \ref{thm:mainsemi}.
This is an extension
of Theorem \ref{thm:main} which features toric inequalities
$p_{ik} p_{jl} \leq p_{ij} p_{kl}$ in addition to
the quadratic binomial equations in $I_{\tilde T}$.
It offers an exact semialgebraic description of 
the model $\mathcal L_{T,\geq}^{-1}$ in terms
of the nonnegative coordinates $p_{ij}$.
The proof of this result is presented in Section \ref{sec:semialg}.
It rests on formulas 
that express
$p_{ij}$ in terms of {\em treks} as in~\cite{DST0}.

Section~\ref{sec:mle} is about fitting Brownian motion tree models to data, given by
a sample covariance matrix $S$ in $\mathbb{S}^n_+$.
We do so by maximizing the log-likelihood function 
\begin{equation}
\label{eq:loglike1}
\ell(\Sigma)\;=\;-\log\det \Sigma-{\rm trace}(S \Sigma^{-1}).
\end{equation}
This function is non-convex. The expression in terms of $K=\Sigma^{-1}$ equals
\begin{equation}
\label{eq:loglike2}
\ell(K)\;=\;\log\det K-{\rm trace}(SK).
\end{equation}
This function is convex in $K$, which motivates analyzing maximum likelihood estimation for Brownian motion tree models as an optimization problem over 
$\mathcal L_T^{-1}$. As we will show in Section~\ref{sec:mle}, in this parameterization maximum likelihood estimation boils down to 
solving a system of 
polynomial equations on $\mathcal L_T^{-1}$. 
The paper concludes with a  brief discussion
on how Theorem~\ref{thm:mainsemi} might be applied
to likelihood inference.

\section{Tree models and their parameters}
\label{sec:BMT}

Brownian motion is a stochastic process that characterizes the random motion of particles.
It is a Wiener process $W_t$  satisfying $W_0=0$, with independent increments, and such that $W_t-W_s$ for $t\geq s$ has a Gaussian distribution with mean zero and variance $t-s$.  Brownian motion on a rooted binary tree $T$ can also be described using the Wiener process. The process starts at vertex $0$. At time $t=t_{2n-1}$, it splits into two, and each of the two processes starts evolving independently at value $W_{t_{2n-1}}$.  It again proceeds
 according to the Wiener process until another splitting event occurs. We  think about this process as evolving along $T$, where~the parameters $t_v$ for inner vertices $v$ represent the times of splitting events. This
 construction is a continuous interpretation of the Gaussian \emph{structural equation model}
 (\ref{eq:BMT}) discussed next.

Given a rooted tree $T$, we define a Gaussian distribution on $T$ as follows. First, set $Y_{0}\equiv 0$. Then to each vertex $v\in V$ we associate independently a Gaussian random variable $\epsilon_{v}$ with mean zero and variance $\theta_{v} \geq 0$.
  The corresponding Markov~process on $T$ is a collection of 
  real-valued random variables $Y_{v}$ for $v\in V$.
  They  satisfy 
\begin{equation}\label{eq:BMT}
 Y_{v}=Y_{u}+\epsilon_{v} \quad\mbox{for every edge }\,u\to v\in E.
\end{equation}
Since a linear transformation of a Gaussian vector is also Gaussian,
 we conclude that the random vector $Y=(Y_{v})_{v\in V}$ is Gaussian. The set of covariance matrices of the marginal distributions on the leaf-variables $(Y_1,\ldots,Y_n)$ 
is  the polyhedral cone  $\mathcal L_{T,\geq}$. 
 
\begin{prop}\label{prop:BMTstr}
The random vector $(Y_1,\ldots,Y_n)$ is normally distributed with mean zero, 
and the entries $\sigma_{ij} = {\rm cov}(Y_{i},Y_{j})$
 of its covariance matrix $\Sigma_\theta$ are
\begin{equation}\label{eq:sigmaYuv}
\sigma_{ij} \,\,\;=\;\sum_{v\leq {\rm lca}(i,j)}\theta_{v}
\qquad {\rm for} \,\, i,j=1,\ldots,n.
\end{equation}
The resulting Gaussians on $\RR^n$ are precisely those with
covariance matrices in $\mathcal L_{T,\geq}$.
\end{prop}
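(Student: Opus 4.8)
The plan is to verify the covariance formula~(\ref{eq:sigmaYuv}) directly from the structural equations~(\ref{eq:BMT}), and then to identify the resulting set of covariance matrices with the polyhedral cone $\mathcal L_{T,\geq}$. First I would solve the recursion~(\ref{eq:BMT}): since $Y_0 \equiv 0$ and each non-root vertex satisfies $Y_v = Y_u + \epsilon_v$ where $u = \pa(v)$, unrolling along the unique directed path from $0$ to $v$ gives $Y_v = \sum_{w \leq v}\epsilon_w$, the sum being over all vertices $w$ on that path, equivalently all $w$ with $\de(v)\subseteq\de(w)$ in the total order fixed in the introduction. This already shows $Y=(Y_v)_{v\in V}$ is a linear image of the independent Gaussian vector $(\epsilon_v)_{v\in V}$, hence Gaussian with mean zero; restricting to leaf coordinates, $(Y_1,\dots,Y_n)$ is Gaussian with mean zero as claimed.

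Next I would compute the covariances. By independence of the $\epsilon_w$ with $\operatorname{var}(\epsilon_w)=\theta_w$, we get
\begin{equation*}
\sigma_{ij} \;=\; {\rm cov}(Y_i,Y_j) \;=\; \sum_{w\leq i}\;\sum_{w'\leq j}{\rm cov}(\epsilon_w,\epsilon_{w'}) \;=\; \sum_{w\,:\,w\leq i\ \text{and}\ w\leq j}\theta_w .
\end{equation*}
The key combinatorial point is that a vertex $w$ lies on the path from $0$ to $i$ \emph{and} on the path from $0$ to $j$ precisely when $w$ is a common ancestor of the leaves $i$ and $j$, i.e.\ when $w \leq {\rm lca}(i,j)$. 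Since the two root-paths share an initial segment terminating exactly at ${\rm lca}(i,j)$, this indexing set is $\{w : w\leq {\rm lca}(i,j)\}$, which gives~(\ref{eq:sigmaYuv}). I expect this to be the only place requiring a small argument about the tree: that the common ancestors of $i$ and $j$ form a chain with top element ${\rm lca}(i,j)$, which follows from $T$ being a tree (the paths from $0$ to $i$ and from $0$ to $j$ agree up to their last common vertex, then separate).

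Finally I would match the image of the parametrization with $\mathcal L_{T,\geq}$. For the inclusion ``$\subseteq$'': writing $\Sigma_\theta$ for the matrix~(\ref{eq:sigmaYuv}), the entry $\sigma_{ij}$ depends only on ${\rm lca}(i,j)$, so $\Sigma_\theta\in\mathcal L_T$; and if ${\rm lca}(i,j)\leq{\rm lca}(k,l)$ then the summation range for $\sigma_{ij}$ is contained in that for $\sigma_{kl}$, and since every $\theta_w\geq 0$ we get $0\leq\sigma_{ij}\leq\sigma_{kl}$, so $\Sigma_\theta\in\mathcal L_{T,\geq}$. For ``$\supseteq$'': given $\Sigma\in\mathcal L_{T,\geq}$ with associated parameters $(t_v)_{v\in V}$ (where $\sigma_{ij}=t_v$ for $v={\rm lca}(i,j)$), I would recover $\theta$ by Möbius inversion along the chain of ancestors, setting $\theta_v = t_v - t_{\pa(v)}$ for non-root $v$ and $\theta$ on the vertex below the root equal to $t$ at that vertex; the defining inequalities of $\mathcal L_{T,\geq}$ say exactly that $t$ is nondecreasing along each root-to-leaf path and nonnegative, which is equivalent to all $\theta_v\geq 0$. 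Then $\Sigma_\theta=\Sigma$ by the telescoping sum, completing the identification. The main obstacle, such as it is, is bookkeeping: being careful that $V$ indexes both non-root vertices and edges, and that ${\rm lca}(i,j)$ of two leaves is itself an element of $V$ whose ancestors (including itself) index the sum—once the root-path description of $\leq$ is in hand, everything else is a short verification.
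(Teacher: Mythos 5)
Your proposal is correct and follows essentially the same route as the paper's proof: unroll the structural equations to get $Y_i=\sum_{v\le i}\epsilon_v$, use independence of the $\epsilon_v$ to obtain (\ref{eq:sigmaYuv}), and observe that the defining inequalities of $\mathcal L_{T,\ge}$ are equivalent to nonnegativity of the $\theta_v$. The only difference is that you spell out the last equivalence via the telescoping inversion $\theta_v=t_v-t_{\pa(v)}$, which the paper leaves as an assertion.
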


\begin{proof}
Using (\ref{eq:BMT}) recursively, we can write each $Y_i$ in terms of the error terms as
$$Y_i \,\,= \,\,\sum_{v\leq i} \epsilon_v.	$$
Equation (\ref{eq:sigmaYuv}) follows from this and the fact that all $\epsilon$'s are mutually independent.  The linear inequalities  $\,\sigma_{ij}\leq \sigma_{kl} \,$
that define the polyhedral cone $\mathcal L_{T,\geq}$ inside
the linear space  $\mathcal L_{T}$
are equivalent to the requirement that the $\theta_i$'s be nonnegative.
 \end{proof}

\begin{example}\label{ex:running1}
Consider the tree in Figure \ref{fig:running0}.
The random variables for the inner vertices of the tree are
$Y_0=0$, $Y_7=\epsilon_7$, $Y_{5}=\epsilon_{7}+\epsilon_{5}$, 
$Y_{6}=\epsilon_{7}+\epsilon_{6}$, and we have
$\,
Y_{1} = \epsilon_{7}+\epsilon_{5}+\epsilon_{1},\;
	Y_{2} = \epsilon_{7}+\epsilon_{5}+\epsilon_{2},\;
	Y_{3} = \epsilon_{7}+\epsilon_{6}+\epsilon_{3},\;
	Y_{4} = \epsilon_{7}+\epsilon_{6}+\epsilon_{4}\,$
	for the leaves.
	
The $\epsilon_v$ are independent univariate Gaussians with mean $0$ and
variance $\theta_v$. Hence the marginal distribution of $(Y_1,Y_2,Y_3,Y_4)$ is Gaussian
 with the  covariance matrix 
\begin{equation}\label{eq:SigmaTheta} \begin{small}
	\Sigma_{\theta}\,\,=\,\,\begin{bmatrix}
	\theta_{1}+\theta_{5}+\theta_{7}& \theta_{5}+\theta_{7}& \theta_{7}& \theta_{7}\\
		\theta_{5}+\theta_{7} & \theta_{2}+\theta_{5}+\theta_{7} & \theta_{7}& \theta_{7}\\
			\theta_{7}& \theta_{7}& \theta_{3}+\theta_{6}+\theta_{7}& \theta_{6}+\theta_{7}\\
				\theta_{7}&\theta_{7} & \theta_{6}+\theta_{7}& \theta_{4}+\theta_{6}+\theta_{7}
	\end{bmatrix}. \end{small}
	\end{equation}
This is the matrix in (\ref{eq:sigmaYuv}) 
and in Figure \ref{fig:running0}. The constraint that the
$\theta_i$ are nonnegative translates into the inequalities $t_1,t_2 \geq t_5$ and
$t_3,t_4 \geq t_6$ and $t_5,t_6 \geq t_7 \geq 0$. \hfill\qed
\end{example}

The extreme rays of the polyhedral cone $\mathcal L_{T,\geq}$ are as follows.
Let $g_v\in \{0,1\}^n$ be the vector with
$(g_v)_i=1$ if $i\in {\rm de}(v)$ and $(g_v)_i=0$ otherwise. The corresponding
rank one matrices $G_v=g_v g_v^T$ form a basis for $\mathcal{L}_T$. In fact,
the matrix in (\ref{eq:sigmaYuv}) equals
\begin{equation}\label{eq:SigmaG}
\Sigma\;=\;\sum_{v\in V}\theta_v G_v .	
\end{equation}

\begin{cor}
The cone $\mathcal L_{T,\geq}$ is a simplicial cone, spanned
by the rank one matrices $G_v$ associated with vertices $v \in V$.
It is contained
in the spectrahedral cone $\mathcal L_T\cap \mathbb S_+^n$.
\end{cor}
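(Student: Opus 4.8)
The plan is to identify $\mathcal{L}_{T,\geq}$ with the image of the nonnegative orthant $\mathbb{R}^V_{\geq 0}$ under the parametrization $\theta\mapsto\sum_{v\in V}\theta_v G_v$, and then to read off both assertions of the corollary from this identification.

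First I would invoke Proposition~\ref{prop:BMTstr}. By (\ref{eq:SigmaG}) the covariance matrix of the Gaussian tree model with parameter vector $\theta$ is $\Sigma_\theta=\sum_{v\in V}\theta_v G_v$, and the proposition --- together with the closing line of its proof --- says that, as $\theta$ ranges over the nonnegative orthant, the matrix $\Sigma_\theta$ ranges over exactly $\mathcal{L}_{T,\geq}$. Thus
\[
\mathcal{L}_{T,\geq}\;=\;\Bigl\{\textstyle\sum_{v\in V}\theta_v G_v\;:\;\theta_v\geq 0\ \text{for all}\ v\in V\Bigr\}.
\]

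Next I would use that $\{G_v:v\in V\}$ is a basis of the $|V|$-dimensional linear space $\mathcal{L}_T$, as recorded in the text. Hence the linear map $\Phi\colon\mathbb{R}^V\to\mathcal{L}_T$, $\theta\mapsto\sum_{v\in V}\theta_v G_v$, is an isomorphism of vector spaces, and by the previous step it restricts to a bijection from $\mathbb{R}^V_{\geq 0}$ onto $\mathcal{L}_{T,\geq}$. A linear isomorphism carries the nonnegative orthant --- a simplicial cone generated by the linearly independent coordinate vectors --- to a simplicial cone generated by their linearly independent images. Therefore $\mathcal{L}_{T,\geq}$ is a simplicial cone, spanned by the rank one matrices $G_v$, $v\in V$.

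It remains to prove the inclusion $\mathcal{L}_{T,\geq}\subseteq\mathcal{L}_T\cap\mathbb{S}_+^n$. Membership in $\mathcal{L}_T$ is immediate from the definition of $\mathcal{L}_{T,\geq}$. For membership in $\mathbb{S}_+^n$, note that each generator $G_v=g_v g_v^T$ is positive semidefinite, hence so is every nonnegative combination $\sum_{v\in V}\theta_v G_v$. I do not expect a genuine obstacle here; the only point that needs care is the first step, namely that the two families of inequalities cutting out $\mathcal{L}_{T,\geq}$ inside $\mathcal{L}_T$ --- nonnegativity of the entries and the ordering constraints $\sigma_{ij}\leq\sigma_{kl}$ --- are \emph{jointly} equivalent to $\theta_v\geq 0$ for every $v\in V$, which is precisely the content established in the proof of Proposition~\ref{prop:BMTstr}.
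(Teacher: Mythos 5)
Your proposal is correct and follows exactly the route the paper intends: the corollary is stated without a separate proof precisely because it is immediate from Proposition~\ref{prop:BMTstr}, the identity (\ref{eq:SigmaG}), and the fact that the matrices $G_v$ form a basis of $\mathcal{L}_T$, which is what you spell out. The only cosmetic caveat is that the paper describes $\mathbb{S}^n_+$ as the cone of positive definite matrices, while your argument (correctly) yields positive semidefiniteness of nonnegative combinations of the $G_v=g_vg_v^T$; this matches the intended reading of the containment as one of closed cones.
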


Note that this inclusion is strict.
For instance, the matrix $\Sigma_\theta$ in (\ref{eq:SigmaTheta}) 
 is positive definite if we set $\,\theta_1 = \theta_2 = \theta_3 = \theta_4 = 5$,
 $\theta_5 = \theta_6= 0$ and $\theta_7 =- 1$.
This means that the linear covariance model is strictly larger than
the Brownian motion tree model.

We next interpret our model in the context  of distance-based phylogenetics.
Using the natural bijection  between non-root vertices and edges,
we label each edge of $T$ with a parameter  $\theta_v$.
This is shown in the tree on the right in Figure \ref{fig:running0}.
We  think of $\theta_v \geq 0$ as the {\em length} of the associated edge. We
 compute the distance between any two leaves of $\tilde T$ by summing the lengths of edges on the unique path joining them. The 
 collection of resulting distances $d_{ij}$ for $i,j=0,1,\ldots,n$ is a \emph{tree metric} on~$\tilde T$. 

The correspondence between ultrametric $n \times n$ matrices and  tree metrics on $n+1$ taxa
is known in phylogenetics as the \textit{Farris transform}. The formulas are
$$
\left\{\begin{array}{ll}
\sigma_{ii}\;\;=\;\; d_{0i} & \mbox{for }1\leq i\leq n\\
\sigma_{ij}\;\;=\;\; \frac{1}{2}(d_{0i}+d_{0j}-d_{ij}) & \mbox{for }1\leq i<j\leq n, 
\end{array}\right.
$$
and these are  equivalent to (\ref{eq:sigmaYuv}).
The inverse of the Farris transform is given by
$$
\left\{\begin{array}{ll}
d_{0i}\;\;=\;\; \sigma_{ii}& \mbox{for }1\leq i\leq n\\
d_{ij}\;\;=\;\; \sigma_{ii}+\sigma_{jj}-2\sigma_{ij}& \mbox{for }1\leq i\leq j\leq n.
\end{array}\right.
$$

\begin{prop}\label{th:Farris}The 
 model $\cL_{T,\geq}$ is identified with the
cone of tree metrics on $\tilde T$ via the Farris transform
$(d_{ij}) \mapsto (\sigma_{ij})$. The parameters $\theta_v$ are the lengths
of the edges.\end{prop}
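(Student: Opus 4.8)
The plan is to read the Farris transform $(d_{ij})\mapsto(\sigma_{ij})$ as a \emph{linear} map from the space $\RR^{\binom{n+1}{2}}$ of dissimilarity maps $(d_{ij})_{0\le i<j\le n}$ on $\tilde T$ to $\mathbb S^n$, to verify that it restricts to a bijection between the cone of tree metrics on $\tilde T$ and $\cL_{T,\geq}$, and to identify the edge lengths with the coordinates $\theta_v$ in $\Sigma=\sum_{v}\theta_v G_v$ from (\ref{eq:SigmaG}). The formulas displayed above for the Farris transform and its inverse already exhibit them as mutually inverse linear maps, so bijectivity of the ambient map is immediate; all the work is in tracking the cone and the parameters.

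First I would start from a tree metric on $\tilde T$ with nonnegative edge lengths $\theta_v$ and compute its Farris transform explicitly. Writing $\ell(w):=\sum_{v\le w}\theta_v$ for the total length of the path from the root $0$ to a vertex $w$, we have $d_{0i}=\ell(i)$, and since the $i$--$j$ path in $\tilde T$ is the symmetric difference of the $0$--$i$ and $0$--$j$ paths while their common part is the $0$--${\rm lca}(i,j)$ path, we get $d_{ij}=\ell(i)+\ell(j)-2\,\ell({\rm lca}(i,j))$. Substituting into $\sigma_{ij}=\tfrac12(d_{0i}+d_{0j}-d_{ij})$ and $\sigma_{ii}=d_{0i}$ yields $\sigma_{ij}=\ell({\rm lca}(i,j))=\sum_{v\le{\rm lca}(i,j)}\theta_v$, which is exactly (\ref{eq:sigmaYuv}). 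Hence the Farris transform carries this tree metric to $\Sigma_\theta=\sum_{v\in V}\theta_v G_v$; in particular, at the level of extreme rays it sends the split pseudometric $\delta_{e_v}$ of an edge $e_v$ (equal to $1$ on the leaf pairs separated by $e_v$ and $0$ otherwise) to the rank-one matrix $G_v=g_vg_v^{T}$.

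Next I would combine this with the preceding Corollary. Every tree metric on $\tilde T$ has the form $d=\sum_v\theta_v\delta_{e_v}$ with $\theta_v\ge0$, and conversely every such nonnegative combination is a tree metric, so the cone of tree metrics is spanned by the $\delta_{e_v}$; by linearity the Farris transform maps it \emph{onto} the cone spanned by the $G_v$, which is $\cL_{T,\geq}$ by the Corollary. Since the Farris transform is injective and the $G_v$ are a basis of $\cL_T$, the $\delta_{e_v}$ are linearly independent, so both cones are simplicial of dimension $|V|$; uniqueness of the representation then forces the length assigned to an edge $e_v$ in any realizing weighting of a given tree metric to equal the coefficient $\theta_v$ of $G_v$ in the corresponding matrix. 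This yields both assertions of the proposition.

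I do not expect a real obstacle: the substance is the bookkeeping identity $d_{ij}=\ell(i)+\ell(j)-2\,\ell({\rm lca}(i,j))$ together with the indicator-function arithmetic behind ${\rm Farris}(\delta_{e_v})=G_v$. The only points that need care are (i) reading ``tree metric on $\tilde T$'' as permitting some edge lengths to be zero, so that the image is the closed cone $\cL_{T,\geq}$ and not merely its relative interior; and (ii) using the hypothesis that $\tilde T$ has no vertex of degree two exactly once, to know that distinct edges induce distinct splits, so that the $|V|$ pseudometrics $\delta_{e_v}$ are genuinely in bijection with, and as independent as, the $|V|$ generators $G_v$ supplied by the Corollary.
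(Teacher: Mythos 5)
Your proposal is correct and follows essentially the same route as the paper: the same path-length bookkeeping identity $d_{ij}=\ell(i)+\ell(j)-2\,\ell({\rm lca}(i,j))$ shows the Farris transform matches the edge-length parameterization with the parameterization $\Sigma_\theta=\sum_v\theta_v G_v$, and invertibility of the linear map identifies the two simplicial cones. The only cosmetic difference is direction (you verify the forward transform, the paper verifies its inverse) and your explicit extreme-ray correspondence $\delta_{e_v}\mapsto G_v$, which the paper leaves implicit.
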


\begin{proof}
The diagonal entry $\sigma_{ii}$ of the covariance matrix is the sum of
the lengths $\theta_v$ of the incoming edges for all vertices $v$ on the path from the root $0$
to  leaf $i$. Therefore, $d_{0i} = \sigma_{ii}$ is the distance from $0 $ to $i$
in the unrooted tree $\tilde T$. Each off-diagonal entry $\sigma_{ij}$
is the length of the path from the root $0$ to ${\rm lca}(i,j)$.
Hence $\sigma_{ii} - \sigma_{ij}$ is the length of the path from
${\rm lca}(i,j)$ to the leaf $i$. We conclude that
$d_{ij} = (\sigma_{ii} - \sigma_{ij}) + (\sigma_{jj} - \sigma_{ij})$ is the
length of the path from leaf $i$ to leaf $j$ in $\tilde T$.
Since the Farris transform is an invertible linear transformation,
it identifies the two simplicial cones in $\RR^{\binom{n+1}{2}}$.
\end{proof}

We next turn to the space of all tree metrics, which is a key object in phylogenetics.
   A classical result of Buneman \cite{buneman1971} states that a metric $D=(d_{ij})$ on $\{0,\ldots,n\}$ is a tree metric (for some tree) if and only if it satisfies the \emph{four point condition}:
\begin{equation}\label{eq:4point}
d_{ij}+d_{kl}\;\leq\; \max\{d_{ik}+d_{jl},d_{il}+d_{jk}\}
\quad \hbox{for all $\,i,j,k,l\in \{0,1,\ldots,n\}$.}
\end{equation}
If $D$ is a tree metric on $\tilde T$ then the following additional equation holds:
\begin{equation}\label{eq:4pointeq} \quad
d_{ik}+d_{jl}\;=\;d_{il}+d_{jk} \qquad
\hbox{if $\{i,j\},\, \{k,l\}$ are cherries in the quartet on $i,j,k,l$}.
\end{equation}
The constraints (\ref{eq:4point}) and (\ref{eq:4pointeq}) are well-known also in
tropical geometry \cite[\S 4.3]{MS} where 
one identifies the space of tree metrics with the
tropical Grassmannian that parametrizes tropical  lines in
$\RR^{n+1}/\RR {\bf 1}$. This is related to Theorem 
\ref{thm:main} as follows.

\begin{rem}
If we set $p_{ij}=e^{-d_{ij}}$ then
the linear relations (\ref{eq:4pointeq})
that hold for tree metrics on ${\tilde T}$ are precisely
the equations $p_{ik} p_{jk} = p_{il} p_{jk}$ that define the
toric ideal $I_{\tilde T}$.
\end{rem}

We now state our main result. It augments Theorem \ref{thm:main} 
by incorporating the inequalities in (\ref{eq:4point}).
The unrooted tree obtained from $\tilde{T}$ by restricting to any four leaves $i,j,k,l$ 
is called a {\em quartet} of $\tilde{T}$. If equality holds in 
(\ref{eq:4point}) then this four-leaf tree is a {\em star quartet}.
If the inequality in (\ref{eq:4point}) is strict then
we call it a {\em trivalent quartet}.

\begin{theorem} \label{thm:mainsemi}
Given any rooted tree $T$,
the set $\mathcal{L}_{T,\geq}^{-1}$ of concentration matrices in the Brownian motion
tree model is the set of positive definite matrices $K$ satisfying
\begin{equation}\label{eq:constraints}
\begin{matrix} &
p_{ij}\, \geq \,0 & \hbox{for all}  \quad 0 \leq i < j \leq n, \\
&	p_{ik}p_{jl}\;=\;p_{il}p_{jk}\;= \; p_{ij}p_{kl} & \hbox{for all star quartets} \,\,\, ijkl, 
		 \\ \hbox{and} \quad &
	p_{ik}p_{jl}\;=\;p_{il}p_{jk}\;\leq\; p_{ij}p_{kl}		
	 & \quad \hbox{for all trivalent quartets}\,\,\, ij|kl.
	\end{matrix}
	\end{equation}
\end{theorem}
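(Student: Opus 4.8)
The plan is to deduce Theorem~\ref{thm:mainsemi} from Theorem~\ref{thm:main} by pinning down the sign of a single real parameter per edge of $\tilde T$. Every matrix occurring on either side of the asserted identity is positive definite with inverse in the linear space $\mathcal{L}_T$: for $\mathcal{L}_{T,\geq}^{-1}$ this is immediate from $\mathcal{L}_{T,\geq}\subseteq\mathbb{S}_+^n$, and for the set defined by (\ref{eq:constraints}) it follows from Theorem~\ref{thm:main}, since the binomial equations listed there generate $I_{\tilde T}$ (a trivalent quartet $ij|kl$ contributing the cherry binomial $p_{ik}p_{jl}-p_{il}p_{jk}$, a star quartet all three quadratic binomials on $\{i,j,k,l\}$). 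Hence in both cases $\Sigma:=K^{-1}=\sum_{v\in V}\theta_v G_v$ for uniquely determined real numbers $\theta_v=\theta_v(K)$, because the $G_v$ form a basis of $\mathcal{L}_T$. By Proposition~\ref{prop:BMTstr} the theorem now amounts to proving, for $K\succ 0$ with $K^{-1}\in\mathcal{L}_T$, that $\theta_v(K)\ge 0$ for all $v$ if and only if $p_{ij}(K)\ge 0$ for all $i<j$ and the (in)equalities in (\ref{eq:constraints}) hold.

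For the ``only if'' direction, the tool is the trek/forest expansion of the coordinates $p_{ij}$ of the Brownian motion tree model, in the spirit of \cite{DST0}: each $p_{ij}$ is a ratio $N_{ij}(\theta)/\det\Sigma$ of two polynomials in the edge lengths $\theta_v$ with nonnegative coefficients, both strictly positive on the interior of $\mathcal{L}_{T,\geq}$. This gives $p_{ij}\ge 0$ at once. For a star quartet the identity $p_{ik}p_{jl}=p_{il}p_{jk}=p_{ij}p_{kl}$ holds throughout $\mathcal{L}_T^{-1}$ and in particular here. For a trivalent quartet $ij|kl$ one must show $p_{ij}p_{kl}-p_{ik}p_{jl}\ge 0$; clearing the common denominator $(\det\Sigma)^2$, this reduces to the combinatorial claim that $N_{ij}N_{kl}-N_{ik}N_{jl}$ has nonnegative coefficients. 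I would prove this by an injection on the combinatorial objects indexing the monomials: a pair of trek-families computing a monomial of $N_{ik}N_{jl}$ is ``uncrossed'' along the unique middle edge $e$ of the quartet $ij|kl$ into a pair computing a monomial of $N_{ij}N_{kl}$, the leftover contributing the extra factor — a Lindstr\"om--Gessel--Viennot style argument.

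For the ``if'' direction, write $\sigma_{ij}=(\mathrm{adj}\,K)_{ij}/\det K$ and note that, as $K$ is the reduced Laplacian of the complete graph on $\{0,\dots,n\}$ with nonnegative edge weights $p_{ij}$, the all-minors matrix--tree theorem exhibits $\det K>0$ and every $(\mathrm{adj}\,K)_{ij}\ge 0$ as nonnegative combinations of monomials in the $p_{ij}$; hence all $\sigma_{ij}\ge 0$. By Proposition~\ref{th:Farris}, $\theta_v$ is the length of the edge $e_v$ of $\tilde T$. If $e_v$ is a pendant edge, or $v$ is a child of the root, then $\theta_v$ equals $\sigma_{ii}-\sigma_{ij}$ (respectively $\sigma_{ij}$) for suitable leaves, and the family of spanning $2$-forests indexing the subtracted term is contained in that indexing the added term, so $\theta_v\ge 0$ automatically. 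If $e_v$ is an internal edge of $\tilde T$, choose leaves $i,j$ below distinct children of $v$ and $k,l$ so that $\{i,j\},\{k,l\}$ are the cherries of a quartet $ij|kl$ whose middle path is exactly $\{e_v\}$; then $\theta_v=\sigma_{ij}-\sigma_{ik}$, and the same trek expansion shows that $\sigma_{ij}-\sigma_{ik}$ and $p_{ij}p_{kl}-p_{ik}p_{jl}$ agree up to a factor positive on $\{K\succ 0,\ p\ge 0\}$, so the trivalent-quartet inequality forces $\theta_v\ge 0$. Running over all $v$ finishes the proof.

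The hard part is the combinatorial positivity at the centre of both directions: once one has a clean trek formula for the $p_{ij}$ of this specific model, the sign of every $\theta_v$ except those attached to internal edges is routine (matrix--tree positivity and nesting of forest families), but the internal parameters genuinely require knowing that the quadratic difference $p_{ij}p_{kl}-p_{ik}p_{jl}$ ``factors through'' the length of the single middle edge of the quartet. An alternative to the explicit trek bookkeeping is to invoke the Desnanot--Jacobi identity, which presents $p_{ij}p_{kl}-p_{il}p_{jk}$ as a fixed $(n-2)\times(n-2)$ minor of $\Sigma$ over $\det\Sigma$, and then to extract the sign from the ultrametric structure of $\Sigma$ noted after Proposition~\ref{th:Farris}; I would expect that route to work with a short case analysis on quartet types.
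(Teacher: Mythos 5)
Your forward direction is essentially the paper's own argument: the expansion of $p_{ij}\det\Sigma$ as a positive sum over vertex-disjoint trek systems is Proposition~\ref{prop:pijtheta}, and your LGV-style ``uncrossing'' injection proving nonnegativity of the quartet difference is exactly the construction in Lemma~\ref{lem:key}, so that half is sound (modulo writing out the injection carefully).

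The gap is in the converse, at the internal-edge parameters. You assert that for a quartet $ij|kl$ whose middle path is the single edge $e_v$, the quantity $\sigma_{ij}-\sigma_{ik}=\theta_v$ agrees with $p_{ij}p_{kl}-p_{ik}p_{jl}$ ``up to a factor positive on $\{K\succ 0,\ p\ge 0\}$''. That identity is neither proved nor true as stated. In the running example (Figure~\ref{fig:running0}, Example~\ref{ex:5_15}) take $v=5$ and the quartet $12|03$: then $\sigma_{12}-\sigma_{13}=\theta_5$, while $(p_{03}p_{12}-p_{02}p_{13})\det\Sigma=\theta_4\theta_5$, so the ``factor'' is $\theta_4/\det\Sigma$. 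This vanishes at perfectly good positive definite points of the model (set $\theta_4=0$, all other parameters positive), where the quartet inequality degenerates to an equality and certifies nothing about the sign of $\theta_5$; worse, in larger trees the analogous factor is a polynomial in parameters (including internal ones) whose nonnegativity is precisely what you are trying to establish, so the argument is circular unless you (a) derive the exact trek-level identity for $(p_{ij}p_{kl}-p_{ik}p_{jl})\det(\Sigma)^2$, and (b) run an induction with a careful choice of quartet, using positive definiteness to guarantee a strictly positive factor (e.g.\ ruling out that both sibling pendant lengths vanish). None of this is in your sketch, and it is the genuinely hard part of the theorem. The paper avoids it entirely by a different device: it shows the constraint set (\ref{eq:constraints}) is stable under marginalization to induced subtrees via a Schur-complement computation (with a five-case analysis of where the deleted leaf attaches to the quartet), thereby reducing to trees with at most three leaves, where nonnegativity of $\theta$ is settled directly — and even there the all-nonpositive sign pattern must be excluded by a determinant contradiction, which indicates that nonnegativity of the $p$'s plus one quartet inequality per edge is not as ``automatic'' as your outline suggests. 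Your pendant-edge and root-edge cases (forest nesting in the all-minors matrix-tree expansion) are fine, but they do not carry the internal edges. The Desnanot--Jacobi alternative you mention faces the same obstruction: it converts the quartet difference into an $(n-2)\times(n-2)$ non-principal minor whose sign is not obviously controlled without an argument of comparable depth.
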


\begin{rem}
These  inequalities are satisfied by 
 $p_{ij}=e^{-d_{ij}}$ where $(d_{ij})$ is any tree metric on $\tilde T$.
Thus, the set of models $\mathcal{L}_{T,\geq}^{-1}$,
where $T$ ranges over all rooted trees on $n$ leaves,
is a multiplicative realization of the space of phylogenetic trees.
This is reminiscent of the
{\em space of phylogenetic oranges} studied by Moulton and Steel~\cite{MoSt}. 
\end{rem}

We illustrate the contents of Theorem~\ref{thm:mainsemi} for our running example.

\begin{example} \label{ex:5_15}
Fix the tree in Figure \ref{fig:running0}
with covariance matrix  $\Sigma_\theta$ in (\ref{eq:SigmaTheta}). 
Set~$s := {\rm det}(\Sigma_\theta)>0$.
Writing the concentration matrix $ K = \Sigma_\theta^{-1}$ as in Example \ref{ex:4k4}, we have
$$ 
\begin{small}
\begin{matrix}
 p_{13} s \,=\, \theta_2 \theta_4 \theta_7, \,\,
 p_{14} s \,=\, \theta_2 \theta_3 \theta_7 , \,\,
 p_{23} s \,=\, \theta_1 \theta_4 \theta_7 , \,\,
 p_{24} s \,=\, \theta_1 \theta_3 \theta_7 , \,\,
(p_{03} p_{12} -p_{02} p_{13}){s} = \theta_4 \theta_5 , \\
(p_{04} p_{12}-p_{02}p_{14}){s} \,=\, \theta_3 \theta_5\,, \,\,\,
(p_{01} p_{34}-p_{04} p_{13}){s} \,=\, \theta_2 \theta_6 \,,\,\,\,
(p_{02} p_{34}-p_{04} p_{23}){s} \,=\, \theta_1 \theta_6 \, ,
\smallskip \\
(p_{12} p_{34}-p_{14} p_{23}){s} \,=\, \theta_5 \theta_6+\theta_5 \theta_7+\theta_6 \theta_7\, ,
\\
p_{01} s \,=\, (\theta_3 \theta_4+\theta_3 \theta_6+\theta_4 \theta_6) \theta_2 \, ,\quad
p_{02} s \,=\, (\theta_3 \theta_4+\theta_3 \theta_6+\theta_4 \theta_6) \theta_1\, , 
\\
p_{03} s \,=\, (\theta_1 \theta_2+\theta_1 \theta_5+\theta_2 \theta_5) \theta_4 \, , \quad
p_{04} s\,=\, (\theta_1 \theta_2+\theta_1 \theta_5+\theta_2 \theta_5) \theta_3 \, , \\
p_{12}s \,=\, \theta_3 \theta_4 \theta_5+\theta_3 \theta_4 \theta_7+\theta_3 \theta_5 \theta_6+\theta_3 \theta_5 \theta_7+\theta_3 \theta_6 \theta_7+\theta_4 \theta_5 \theta_6+\theta_4 \theta_5 \theta_7+\theta_4 \theta_6 \theta_7\, , \\
p_{34}s \,=\, \theta_1 \theta_2 \theta_6+\theta_1 \theta_2 \theta_7+\theta_1 \theta_5 \theta_6+\theta_1 \theta_5 \theta_7+\theta_1 \theta_6 \theta_7+\theta_2 \theta_5 \theta_6+\theta_2 \theta_5 \theta_7+\theta_2 \theta_6 \theta_7.
\end{matrix}
\end{small}
$$
The five quadratic binomials in $I_{\tilde T}$ are zero for these $p_{ij}$.
Assuming this, Theorem~\ref{thm:mainsemi} says that
these $15$ expressions are nonnegative if and only if
$\theta_1,\ldots,\theta_7 \geq 0$. 
\end{example}

\section{Toric ideals from trees} \label{sec:toric}

In this section we prove Theorem~\ref{thm:main}. The proof of  Theorem~\ref{thm:mainsemi} is given in Section~\ref{sec:semialg}. The following 
code in {\tt Macaulay2} \cite{M2} provides the quadratic generators for our running example. It also shows that the rooted tree $T$ need not be binary. 

\begin{example}  \label{ex:M2} Example~\ref{ex:4k4}
can be verified in {\tt Macaulay2} \cite{M2} by running this code:
\begin{verbatim}
R = QQ[t1,t2,t3,t4,t5,t6,t7,p01,p02,p03,p04,p12,p13,p14,p23,p24,p34];
S = matrix {{t1,t5,t7,t7},
            {t5,t2,t7,t7},
            {t7,t7,t3,t6},
            {t7,t7,t6,t4}};
K = matrix {{p01+p12+p13+p14, -p12, -p13, -p14},
            {-p12, p02+p12+p23+p24, -p23, -p24},
            {-p13, -p23, p03+p13+p23+p34, -p34},
            {-p14, -p24, -p34, p04+p14+p24+p34}};
id4 = matrix {{1,0,0,0},{0,1,0,0},{0,0,1,0},{0,0,0,1}};
I = eliminate({t1,t2,t3,t4,t5,t6,t7},minors(1,S*K-id4))
codim I, degree I, betti mingens I
\end{verbatim}
As claimed, the toric ideal has codimension $3$, degree $5$ and five quadratic generators.

We now examine non-binary trees. First we replace the two occurrences of {\tt t6}
by {\tt t7} in the covariance matrix {\tt S}. The resulting tree has $|V| = 6$. By running
the modified {\tt Macaulay2} code, we see that the ideal is still toric.  It has
codimension $4$, degree $8$ and 7 quadratic generators. Finally, we replace
both  {\tt t5} and {\tt t6} with~{\tt t7}. 
Now the unrooted tree $\tilde T$ has $|V| = 5$. It
is the {\em star tree} with leaves $0,1,2,3,4$. Its toric ideal $I_{\tilde T}$
is the ideal of the {\em second hypersimplex}. It has codimension
$5$ and degree $11$, with 10 quadratic generators. 
Modifying the code confirms these data. \hfill\qed
\end{example}
  
  \begin{proof}[Proof of Theorem \ref{thm:main}]
  We use the following parametric representation for the toric variety
  of the ideal $I_{\tilde T}$ associated with the
 unrooted tree ${\tilde T}$. It is given by
 Laurent monomials in the entries $t_v$ of the matrix representation of
the rooted tree~$T$:
\begin{equation}
\label{eq:toricpara}
\begin{matrix} 
 p_{ij} & \mapsto & t_{{\rm lca}(i,j)}/(t_i t_j) &\quad {\rm for} \,\,1 \leq i < j \leq n, \\
 p_{0i} & \mapsto & 1/t_i &  {\rm for} \,\,1 \leq i \leq n .
 \end{matrix}
 \end{equation}
The ideal $I_{\tilde T}$ is the kernel of the ring homomorphism
$\RR[p]\rightarrow \RR[t^{\pm}]$ given by (\ref{eq:toricpara}). 

The variety $\mathcal{L}_T^{-1}$ is a cone in $\mathbb S^n$
given parametrically by mapping a covariance matrix $\Sigma$ to
its inverse $K =\Sigma^{-1}$. Since the parametrization
is homogeneous, we may replace the inverse by the adjoint.
By slight abuse of notation we set
$K = {\rm det}(\Sigma) \cdot \Sigma^{-1}$.
The entries $\kappa_{ij}$ of the matrix $K$ are homogeneous polynomials
of degree $n-1$ in the parameters $t =(t_v)$ for $v\in V$. The same holds
for the coordinates $p_{ij}$ in (\ref{eq:pcoord}). We write
$P_{ij}(t)$ for these homogeneous polynomials.
Our claim states that the toric ideal $I_{\tilde T}$ coincides
with the kernel of the ring homomorphism $\RR[p]\rightarrow \RR[t], \,
p_{ij} \mapsto P_{ij}(t)$.

To prove this, we examine the initial monomials and the irreducible 
factorization of the polynomials $P_{ij}(t)$. Here we fix the 
degree reverse lexicographic order on $\RR[t]$ given by
$t_u\succ t_v$ if $u\leq  v$ in $T$. For $1 \leq i < j \leq n$,
the polynomial $P_{ij}(t)$ is equal (up to sign) to the determinant of the
$(n-1) \times (n-1)$ submatrix of $\Sigma$ that is obtained by
deleting row $i$ and column $j$.
The initial monomial
is the product of the entries of that submatrix which appear along the main diagonal.
To be precise, we find
$$ {\rm in}(P_{ij}(t)) \,\,=\,\, t_1 t_2 \cdots t_n \cdot t_{{\rm lca}(i,j)}/(t_i t_j) .$$

The polynomial $P_{0i}(t)$ is the determinant of the $n \times n$ matrix
obtained from $\Sigma$~by replacing the $i$th row with the
all-ones vector $(1,1,\ldots,1)$. Its initial monomial~equals
$$ {\rm in}(P_{0i}(t)) \,\,=\,\, t_1 t_2 \cdots t_n \cdot (1/ t_i ). $$
Hence, by (\ref{eq:toricpara}), the relations among the initial monomials
are precisely given by $I_{\tilde T}$. We claim that each of the
quadratic binomial relations among the above Laurent monomials
lifts to exactly the same relation among the full polynomials $P_{ij}(t)$ and 
$P_{0i}(t)$. We shall prove this by examining the factorizations
of these polynomials.

In what follows we first assume that $T$ is a binary tree, i.e.~every
vertex in $V \backslash \{1,2,\ldots,n\}$ has precisely two children in $T$.
At the end of the proof, we  shall derive Theorem~\ref{thm:main}
for non-binary trees from the same statement for binary trees.

For any inner vertex $k$ in the rooted binary tree $T$, let $T_k$ denote the rooted tree obtained 
from $T$ by deleting all edges and vertices below $k$. Thus $T_k$
is a rooted tree with leaves $\{k\} \cup (\{1,\ldots,n\} \backslash {\rm de}(k))$.
Let $D_k(t)$ denote the determinant of its covariance matrix.
This is a homogeneous polynomial of degree $n+1-|{\rm de}(k)|$. For any directed edge $u \rightarrow v$ of the tree $T$, we consider the submatrix
of $\Sigma$ with row indices  ${\rm de}(u)\backslash {\rm de}(v)$ and
 column indices  $({\rm de}(u)\backslash {\rm de}(v)) \cup \{k\}$, for any
fixed $\,k \in {\rm de}(v)$. 
This matrix does not depend on $k$,
and it has one more column than
rows. We make it square by placing the
all-ones vector $(1,\ldots,1)$ into the first row.
 We write
$E_{u  v}(t)$ for the determinant of that square matrix.
This is a homogeneous polynomial in $(t_v)_{v\in V}$ of degree 
$|{\rm de}(u)\backslash {\rm de}(v)|$.
By convention,  $E_{0 v}=1$ for the root edge $0\to v$.

Consider the path between any two leaves $i$ and $j$ in the unrooted tree $\tilde T$. Each
vertex $u$ in the interior of such a path has a unique child $v$ in 
the rooted tree $T$ that is not on the path. 
Here we are using the assumption that $T$ is a binary tree.
 The only exception is the top vertex $u = {\rm lca}(i,j)$ 
 on the path between $i$ and $j$ in $T$. 
 
 We find that the polynomial
$P_{0i}(t)$ is equal to the product of all determinants $E_{u  v}(t)$
where $u \rightarrow v$ is any edge on the path from $0$ to $i$.
Similarly,  the $(n-1) \times (n-1)$ determinant $P_{ij}(t)$ 
is equal to $D_{{\rm lca}(i,j)}(t)$ times
the product of all $E_{u  v}(t)$
where the vertex $u \not= {\rm lca}(i,j)$ is on the path from leaf $i$ to leaf $j$.
One verifies this by examining for which parameter values $t$ these expressions  vanish,
and by noting that the initial monomials coincide with the
products of the initial monomials of the factors:
$$ 
\begin{matrix}
{\rm in}(D_k(t)) & = & t_k \cdot \prod \{ t_i \,: \, i \in \{1,\ldots,n\} \backslash {\rm de}(k)\},
\\
{\rm in}(E_{u  v}(t))  & = & \prod \{ t_i \,: \, i \in {\rm de}(v) \}.
\end{matrix}
$$

The above factorizations of $P_{0i}(t)$ and $P_{ij}(t)$ into the determinants
$D_\bullet(t)$ and $E_\bullet(t)$
 show that each generator 
$p_{ik} p_{jl} - p_{il} p_{jk} $ of $I_{\tilde T}$ vanishes on our variety.
By our analysis of the leading monomials, there are no relations
among the  polynomials $P_{ij}(t)$ and
$P_{0i}(t)$ beyond those in $I_{\tilde T}$.
In fact, our analysis shows that these polynomials  form a {\em Khovanskii  basis} 
(cf.~\cite{KM}) for the reverse lexicographic monomial order on the~$t_i$.

We now know that Theorem~\ref{thm:main} holds for all binary trees. It
remains to derive from this the same statement for all non-binary trees.
The property for rooted trees to be binary translates into the property
for unrooted trees to be trivalent.
Let $\tilde T$ be any non-trivalent tree and let $[\tilde T]$ 
be the set of all trivalent trees $\tilde U$ that are obtained by refining $\tilde T$.
One verifies that the following identity among toric ideals holds:
\begin{equation}
\label{eq:identityideals}
 I_\mathcal{\tilde T} \quad = \quad \sum_{\tilde U \in [\tilde T]}
I_{\tilde U}. 
\end{equation}

Similarly, the linear space $\mathcal{L}_T$ is the intersection of all
the linear spaces $\mathcal{L}_U$, where $\tilde U$ runs over
$[\tilde{T}]$. Since matrix inversion is a birational isomorphism,
the variety $\mathcal{L}_T^{-1}$ is the intersection of 
the toric varieties $\mathcal{L}_U^{-1}$ where $\tilde U $ runs over
the trivalent trees in $ [\tilde{T}]$. The Nullstellensatz implies that 
the sum of toric ideals (\ref{eq:identityideals})  cuts out 
$\mathcal{L}_T^{-1}$ set-theoretically. This shows that 
$\mathcal{L}_T^{-1}$ is a toric variety, with toric ideal in (\ref{eq:identityideals}).
  \end{proof}

\begin{example}\label{ex:running2}  Consider the binary tree in
Figure \ref{fig:running0} and Examples~\ref{ex:4k4} and~\ref{ex:M2}.
  The special determinants defined above are the following polynomials:
$$ 
E_{51} = t_2 - t_5 \, , \,\,
E_{52} = t_1 - t_5 \, , \,\,
E_{63} = t_4- t_6 \, , \,\,
E_{64} = t_3 - t_6\, , $$
$$ \begin{small}
E_{7  5} \,\,= \,\, {\rm det}
\begin{bmatrix}
 1&1&1 \\
t_7 &   t_3 &  t_6  \\
t_7 &   t_6&   t_4 
\end{bmatrix}, \quad
E_{7  6} \,\,= \,\,{\rm det}
\begin{bmatrix}
 1&1&1 \\
t_1 &   t_5 &  t_7  \\
t_5 &   t_2&   t_7 
\end{bmatrix}. \end{small}
$$

We are interested in the projective variety  in
$\mathbb{P}^{9}$ that is parametrized by
$$ \begin{matrix}
  p_{01} \, = \,  E_{75 }E_{51},\,\,
  p_{02} \, = \,  E_{75 } E_{52},\,\,
  p_{03}\, = \,  E_{76} E_{63},\,\,
  p_{04} \, = \, E_{76 } E_{64} ,\,\,
 p_{12}  \, = \, D_5,\,\,p_{34}  \, = \, D_6, \\
 p_{13}  \, = \,  E_{51} D_7 E_{63},\,\,\,
 p_{14}  \, = \,  E_{51} D_7 E_{64},\,\,\,
 p_{23}  \, = \,  E_{52} D_7 E_{63},\,\,\,
 p_{24}  \, = \,  E_{52} D_7 E_{64}.
 \end{matrix}
$$
One verifies that this is the variety defined by the toric ideal
$I_{\tilde T}$ seen in Example~\ref{ex:4k4}.
Furthermore, the same toric variety is also parametrized by the initial monomials
$ {\rm in}(p_{01})= t_2 t_3 t_4, \, {\rm in}(p_{02})= t_1 t_3 t_4, \,\ldots \, ,\,
{\rm in}(p_{24}) = t_1 t_3 t_7$, and $\,
{\rm in}(p_{34}) = t_1t_2 t_6 $. \hfill\qed
\end{example}

\begin{rem}
Tropical geometers know that the toric ideals
$I_{\tilde T}$ are precisely the monomial-free initial ideals of
the Pl\"ucker ideal that defines the Grassmannian of~lines. The latter
arises in a manner that is similar to our passage from
covariance matrices to concentration matrices, namely by 
inverting matrices $\Sigma$ that have a Hankel structure.
This is the content of \cite[Proposition 7.2]{MSUZ}. We do not
know whether this is related to the present paper. 
Is it possible to derive Theorem \ref{thm:main} by a degeneration argument from
the relationship between Hankel matrices $\Sigma$ and B\'ezout matrices $K$?
\end{rem}

\section{Maximum likelihood algebra}
\label{sec:mle}

The log-likelihood function for Gaussian random variables
is the function $\ell(\Sigma)$ in (\ref{eq:loglike1}).
Here $S = (s_{ij})$ is a fixed sample covariance matrix, i.e.~$S = \frac{1}{N} X X^T$ 
where $X$ is a real $n \times N$ matrix
whose columns are the observed samples.
Maximum likelihood estimation is concerned with
maximizing the expression (\ref{eq:loglike1})
over all covariance matrices $\Sigma = (\sigma_{ij})$ in the model of interest.
This optimization problem is equivalent to maximizing the 
expression (\ref{eq:loglike2}) over all concentration matrices $K$ in the model.

The optimal solution to this problem is denoted
by $\hat \Sigma = ({\hat \sigma}_{ij})$ or
$\hat K = ({\hat \kappa}_{ij})$. This is called the
{\em maximum likelihood estimate} (MLE) for the data $S$.
Here the model is fixed but the data $S$ can vary. 
We therefore think of the MLE as a function of $S$.

In this section we study the MLE
for the Brownian motion tree model
$\mathcal{L}_{T,\geq}$. The idea is to take advantage of
the toric structure revealed in Theorem~\ref{thm:main}. Thus,
we use the coordinate change (\ref{eq:pcoord})
that writes the concentration matrix $K$ 
as the reduced Laplacian for the complete graph
on $n+1$ vertices with edge labels $p_{ij}$.
With this, the expression (\ref{eq:loglike2})  is a function
of the $p_{ij}$, subject to the toric constraints in $I_{\tilde T}$.
This gives us the flexibility to choose a convenient parametrization
of the toric variety.

In algebraic statistics, one distinguishes two kinds of polynomial constraints
for a statistical model, namely equations and inequalities.
It is customary to  first focus on the equations and examine the MLE in that setting before incorporating inequalities.

In our paper, the model
is given by the semialgebraic set $\mathcal{L}_{T,\geq}^{-1}$. 
This set satisfies the inequalities in Theorem~\ref{thm:mainsemi}.
For the discussion of MLE in the current section, we ignore the inequality constraints and identify the set $\mathcal{L}_{T,\geq}^{-1}$ with its
Zariski closure, which is the toric variety $\mathcal{L}_T^{-1} = \mathcal{V}(I_{\tilde T})$.
The critical points of the likelihood function $\ell(K)$ on that variety
are defined by a system of polynomial equations, 
known in statistics as the {\em likelihood equations}.
These can be derived by using Lagrange multipliers, or
via a monomial parametrization of
the toric variety $\mathcal{V}(I_{\tilde T})$.

The {\em maximum likelihood degree} of the model is, by definition,
the number of complex solutions to the likelihood equations for generic data $S$.
This number is an algebraic invariant of the ideal $I_{\tilde T}$. To compute it
we take $S$ to be a general symmetric $n \times n$ matrix of full rank $n$
and we count all complex critical points of $\ell(K)$.
 
\begin{prop} \label{prop:mldegree}
The maximum likelihood degree
of the Brownian motion tree model on a binary tree $\tilde T$ with
$\,n=2,3,4,5,6,7,8\,$ leaves is equal to
$\,1,1,5,17,61,233,917$.
\end{prop}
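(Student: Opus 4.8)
The plan is to compute the maximum likelihood degree of the Brownian motion tree model directly from its toric structure, using the fact established in Theorem~\ref{thm:main} that $\mathcal{L}_T^{-1} = \mathcal{V}(I_{\tilde T})$ is a toric variety with the explicit monomial parametrization \eqref{eq:toricpara}. The numbers $1,1,5,17,61,233,917$ are small enough that a computer-algebra verification is feasible, so the bulk of the work is setting up the likelihood equations correctly and certifying the count for each $n$; I do not expect a closed-form proof for general $n$.

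First I would recall the parametrization $p_{ij} \mapsto t_{{\rm lca}(i,j)}/(t_i t_j)$ and $p_{0i}\mapsto 1/t_i$ from \eqref{eq:toricpara}, and substitute it into the concentration matrix $K$ written as the reduced Laplacian from \eqref{eq:pcoord}. Since the $p_{ij}$ are Laurent monomials in $t=(t_v)_{v\in V}$, the log-likelihood \eqref{eq:loglike2}, $\ell(K) = \log\det K - {\rm trace}(SK)$, becomes a function of the $|V|$ parameters $t_v$. The critical equations $\partial \ell/\partial t_v = 0$ for $v\in V$ form a system whose number of complex solutions, for generic symmetric $S$ of full rank, is the ML degree. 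Here one uses the standard fact (e.g.\ from the theory of ML degrees of toric varieties) that $\det K$ is itself — up to a monomial factor coming from the Laurent denominators — a polynomial in $t$ (indeed a weighted sum of spanning-tree monomials via the matrix-tree theorem applied to the Laplacian structure), so $\log\det K$ differentiates cleanly and the equations clear to polynomial form.

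Next I would reduce to a clean polynomial system amenable to exact computation. Using the birational parametrization, the critical point conditions can be written as: find $t$ such that the gradient of $\log\det K(t) - {\rm trace}(S\,K(t))$ vanishes. One removes the scaling degree of freedom (the parametrization is homogeneous, as noted in the proof of Theorem~\ref{thm:main}, so I would fix one coordinate or work projectively) and then count solutions with a Gr\"obner-basis or numerical-homotopy computation in \texttt{Macaulay2} or \texttt{HomotopyContinuation.jl}, for each binary tree shape on $n=2,\dots,8$ leaves. A key observation to record is that the ML degree depends only on the toric ideal $I_{\tilde T}$, hence (for binary trees of a fixed size) it could a priori depend on the tree topology; part of the verification is checking that all binary trees on $n$ leaves give the same number, so the statement "the Brownian motion tree model on a binary tree with $n$ leaves" is well-posed. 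For $n\le 5$ this is immediate since all binary trees are isomorphic up to relabeling (caterpillar for $n\le5$); for $n=6,7,8$ one checks the snowflake/other shapes separately and confirms agreement.

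The main obstacle I anticipate is not conceptual but computational: the elimination needed to isolate the critical ideal grows quickly in $n$, and for $n=8$ (where the ML degree is $917$ and $|V|=13$) a direct Gr\"obner basis over $\mathbb{Q}$ may be infeasible, so I would fall back on numerical homotopy continuation with trace tests or monodromy to certify the solution count, together with a finite-field Gr\"obner computation as a sanity check on the degree. A secondary subtlety is ensuring the generic data $S$ does not accidentally land on a discriminant locus where solutions merge or escape to the boundary $t_v=0$ or to infinity; this is handled by choosing $S$ random and, if desired, verifying the count is stable under several choices. I would also remark that $n=2,3$ giving ML degree $1$ is consistent with the model being (the closure of) a very low-dimensional rational variety on which the MLE is a rational function of $S$ — in fact for $n\le 3$ the toric ideal $I_{\tilde T}$ is trivial or a single binomial, matching Example~\ref{ex:M2}.
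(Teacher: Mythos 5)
Your proposal takes essentially the same route as the paper: the authors also prove this proposition purely computationally, by setting up the likelihood equations for the toric parametrization, counting critical points with Gr\"obner bases in \texttt{maple} (over a finite field for $n\geq 6$), and checking all combinatorial types of trees, with Example~\ref{ref:galois5} illustrating $n=4$. One small correction to your plan: since $\tilde T$ has $n+1$ leaves, distinct binary topologies already occur at $n=5$ (caterpillar versus snowflake on six leaves), so the topology-independence check must start there, not at $n=6$.
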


\begin{proof} This result was found by
symbolic computation, namely using the Gr\"obner basis
package  in the computer algebra system {\tt maple}.  For $n \geq 6$ the computation
was carried out over a finite field. All combinatorial types of trees were considered.
See Example \ref{ref:galois5} for an illustration
of the case  $n=4$ where the ML degree is $5$.
\end{proof}

This result is complementary to the usual approach in computational statistics
where one maximizes the likelihood function using a local numerical method,
such as the Newton-Raphson algorithm. Local methods perform best in a regime
where the likelihood function is concave. Such a regime was identified  in
 \cite{ZUR14}, where concavity was shown to hold with high probability when
 the dimension $n$ is small relative to the sample size $N$. In that analysis it was
 essential to use all constraints of the model, i.e., not just
 the equations but also the inequalities. 
 
The maximum likelihood degree being equal to one  means that the MLE
can be written as a rational function of the data. Proposition \ref{prop:mldegree}
says that this happens for our model when $n =2$ and $n=3$. We next present 
the formulas for these two cases.
 
 \begin{example}[$n=2$]
The toric ideal $I_{\tilde T}$ equals $\{0\}$, so our model is the full
Gaussian family. This means that the MLE equals the sample covariance matrix:
$$ \hat \sigma_{11} = s_{11},\,\,
 \hat \sigma_{12} = s_{12},\,\,  \hat \sigma_{22} = s_{22}. $$
Since the MLE of the parameters is $\hat t_1=s_{11}$, $\hat t_2=s_{22}$, $\hat t_3=s_{12}$, this leads to valid parameters for the Brownian motion tree model if $\min\{s_{11},s_{22}\}\geq s_{12}\geq 0$.\hfill\qed
 \end{example}
 
 \begin{example}[$n=3$]
 We label the rooted tree $T$ so that
 $\{1,2\}$ is a clade. Hence  $\{1,2\}$ and $\{0,3\}$ are the
 cherries in the unrooted tree $\tilde T$. Our toric ideal is principal:
  $$ I_{\tilde T} \,=\, 
 \langle p_{01} p_{23} - p_{02} p_{13} \rangle \,=\,
 \langle
 \kappa_{11} \kappa_{23}- \kappa_{12} \kappa_{13} 
 + \kappa_{12} \kappa_{23} - \kappa_{13} \kappa_{22}
 \rangle.
 $$
This is equivalent to setting $\sigma_{13} = \sigma_{23}$ in the
covariance matrix $\Sigma = K^{-1}$.
The MLE  is a rational function of the entries $s_{ij}$ of the sample covariance 
matrix $S$. We define
$$
c\,=\,(s_{11}-2 s_{12}+s_{22})s_{33} - (s_{13}-s_{23})^2.
$$
The entries $\hat \sigma_{ij}$ of the estimated
covariance matrix  $\hat\Sigma$ satisfy
$\, \hat\sigma_{33}  =   s_{33}\,$ and
$$ \begin{matrix}
\hat\sigma_{11} &\! = \! &  s_{11} - 2 (s_{13}{-}s_{23}) (s_{11} s_{33}{-}s_{12} s_{33}
{-}s_{13}^2{+}s_{13} s_{23}) (s_{11} s_{23}{-}s_{12} s_{13}{-}s_{12} s_{23}{+}s_{13} s_{22})/c^2 ,\\
\hat\sigma_{12} & \! = \! &  s_{12} - \, (s_{13}{-}s_{23}) (s_{11} s_{33}{-}s_{13}^2{-}s_{22} s_{33}
{+}s_{23}^2) (s_{11} s_{23}-s_{12} s_{13}-s_{12} s_{23}+s_{13} s_{22})/c^2 ,\\
\hat\sigma_{22} & \! =\!&  s_{22} - 2 (s_{13}{-}s_{23}) (s_{12} s_{33}{-}s_{13} s_{23}
{-}s_{22} s_{33} {+}s_{23}^2) (s_{11} s_{23}{-}s_{12} s_{13}{-}s_{12} s_{23}{+}s_{13} s_{22})/c^2.
\end{matrix}
$$

The remaining two matrix entries must be equal:
$$
\begin{matrix}
& \hat\sigma_{13} & =& 
 s_{13} - (s_{13}-s_{23}) (s_{11} s_{33} -s_{13}^2-s_{12} s_{33}+s_{13} s_{23})/c
 \\
= & \hat\sigma_{23} &  = & s_{23} - (s_{23}-s_{13}) 
(s_{22} s_{33}-s_{23}^2-s_{12} s_{33}+s_{13} s_{23})/c.
\end{matrix}
$$
The following two linear forms are preserved
when passing from data to MLE:
$$
\hat \sigma_{11}-2\hat \sigma_{12}+\hat \sigma_{22}\;=\;s_{11}-2s_{12} +s_{22}
\qquad {\rm and} \qquad \hat\sigma_{33}  =   s_{33}.
$$
Writing $K = (k_{ij}) = S^{-1}$ for the sample concentration matrix, we 
note that $K-\hat K$ is a rank $2$ matrix which depends only on $s_{33}$, $s_{13}-s_{23}$, and $s_{11}-2s_{12}+s_{22}$. Also,
$ c \;= \; (k_{11} + 2 k_{12} + k_{22})/{\rm det}(K).$ \hfill\qed
\end{example} 

\begin{example}[$n=4$] \label{ref:galois5}
We consider the  tree $\tilde T$ in Figure \ref{fig:running0}.
Its toric variety $\mathcal{V}(I_{\tilde T}) = \mathcal{L}_T^{-1} \subset \mathbb{P}^9$
 was discussed in Examples 
\ref{ex:4k4}, \ref{ex:M2} and \ref{ex:running2}. 
We shall prove that the MLE for this model cannot be expressed in radicals.
For this, we fix the parametrization
\begin{equation}
\label{eq:nicepara}
 \begin{matrix} p_{01} = u_1,\, p_{02} = u_2,\, p_{03} = u_3,\, p_{04} = u_4,\,\,
 p_{12} = u_1 u_2 u_6,\, p_{13} = u_1 u_3 u_5, \,\\ \qquad \quad
    p_{14} = u_1 u_4 u_5,\, \,\, p_{23} = u_2 u_3 u_5,\,\,\,
     p_{24} = u_2 u_4 u_5,\,\,\, p_{34} = u_3 u_4 u_7 . 
\end{matrix}     
\end{equation}
We substitute this into the  concentration matrix $K$ in Example \ref{ex:4k4}.
The determinant of that $4 \times 4$ matrix   is a polynomial of
degree $10$ with $81$ terms:
$$ {\rm det}(K) \, = \,
    u_1^4 u_2 u_3 u_4 u_5^2 u_6+3 u_1^3 u_2^2 u_3 u_4 u_5^2 u_6
    +u_1^3 u_2 u_3^2 u_4 u_5^3
+ \,\cdots \,    +u_1 u_2 u_3 u_4^2 u_7+u_1 u_2 u_3 u_4.
$$

For our computation we now take the  sample covariance matrix 
\begin{equation}
\label{eq:dataS}
\begin{small}
S \,\, = \,\, \begin{bmatrix}
5 & 3 & 1 & 2 \\
 3 & 5 & 1 & 1 \\
 1  & 1 & 5 & 3 \\
  2  & 1 & 3 & 4 \\
 \end{bmatrix} .\end{small}
 \end{equation}
 Thus $ {\rm trace}(SK) =   4 u_1 u_2 u_6 + 8 u_1 u_3 u_5 +  \cdots  + 5 u_3 + 4 u_4 $.
 Our goal is to maximize the likelihood function 
 $\log(\det( K))-{\rm trace}(SK) $ where $(u_1,u_2,\ldots,u_7)$ ranges over $\RR^7$.
Its seven partial derivatives are rational functions in the $u_j$. We clear
denominators and impose ${\rm det}(K) \not= 0$.
This results in a system of polynomial equations. We fix
the lexicographic term order with $u_1 > u_2 > \cdots > u_7$,
 we compute the reduced Gr\"obner basis in {\tt maple}, and we find
 that it has a triangular shape. For $i=1,2,\ldots,6$, the Gr\"obner basis 
 has an element $u_i - p_i(u_7)$,
where $p_i$ is a univariate polynomial of degree six with
large rational coefficients. In addition, we see the quintic polynomial
\begin{equation}
\label{eq:S5}
\begin{small} \quad
\begin{matrix}
5955844829180400 \,u_7^5 \,-\, 203897411425749580 \, u_7^4  \,+\, 129689372089999498 \,u_7^3 \\
     - \,139971736881354888 \,u_7^2  \,+\, 44907572962723196 \,u_7 \,-\, 5517030143672333.
     \end{matrix}
     \end{small}
\end{equation}
This polynomial has precisely one real root at
$\hat u_7 = 33.607528... $. By back-substitution, we compute
the estimated concentration matrix $\widehat K$, and we find its inverse to be
$$ \begin{small} \widehat \Sigma \, = 
\begin{bmatrix}
  4.757115029565996  & \!3.040016668717226 & \! 1.418803877886187 &
  \! 1.418803877886187 \\
  3.040016668717226 &  \! 5.322918307868457 & \! 1.418803877886187 & 
  \! 1.418803877886187  \\
  1.418803877886187 & \! 1.418803877886187 & \! 5.295621559259030 & 
  \! 3.094192269251272\\
  1.418803877886187 & \! 1.418803877886187 & \! 3.094192269251272 & 
  \! 3.892762979243514
\end{bmatrix}\!\!.
\end{small}
$$
Note that this matrix lies in $\mathcal{L}_{T,\leq}$, so it is the true MLE for the statistical model. 

Using {\tt maple}, we also check that the {\em Galois group} of the polynomial (\ref{eq:S5})
over the rational numbers $\mathbb{Q}$ is the symmetric group on $5$ letters.
Hence $\hat u_7$ cannot be written in radicals over  $\mathbb{Q}$.
This implies that the MLE  cannot be written in radicals. \hfill\qed
\end{example}

Proposition \ref{prop:mldegree} only applies to rooted trees $T$
that are binary. This raises the question what happens for
 degenerate tree topologies. At first glance, one might
think that the ML degree decreases for special trees.
However, this is not the case:

\begin{example}[$n=4$ revisited] \label{ex:revisit4}
Let $\tilde T$ be the star tree on five leaves, so
$T$ is the directed tree obtained from the tree in Figure~\ref{fig:running0}
by shrinking the edges labeled $\theta_5 $ and $\theta_6$.
Recall from Example \ref{ex:M2} that $I_{\tilde T}$ has
codimension $5$ and degree $11$. It has  $10$ quadratic generators.
We obtain a parametrization by setting $u_5 = u_6 = u_7$ in
(\ref{eq:nicepara}). 

Performing the same computation as in
Example \ref{ref:galois5}, we find that the ML degree of this 
star tree model is {\bf 21}. For the sample covariance matrix $S$ in (\ref{eq:dataS}), we find
$$ \begin{small}
\widehat \Sigma \, = \begin{bmatrix}
        2.945585253871356    &
         3.350776006974025     &
         3.350776006974025    &
         3.350776006974025 \\
  \!       3.350776006974025     &
  \!       5.654229276230780   &
  \!     3.350776006974025    &
  \!     3.350776006974025 \\
   \!     3.350776006974025     &
   \!    3.350776006974025    &
   \!  \!  11.10861203686456 &
   \!    3.350776006974025  \\
   \!    3.350776006974025     &
   \!    3.350776006974025    &
   \!    3.350776006974025  &
   \!    5.509927633167128
                  \end{bmatrix} \!\! .
\end{small}
$$
The matrix entries in $\widehat{\Sigma}$ are algebraic numbers of degree {\bf 21} over $\mathbb{Q}$. \hfill\qed
\end{example}

In the case of star trees, the MLE problem can be formulated via  (\ref{eq:loglike1}) as follows:

\begin{itemize}
\item[$\bullet$] {\em
Minimize $\,\log(\det \Sigma)-{\rm trace}(S \Sigma^{-1})\,$
over the set of symmetric matrices $\Sigma \in \mathbb{S}^n_+$ whose off-diagonal entries are equal and smaller than the diagonal entries.}
\end{itemize}
We obtained the following result concerning the algebraic degree 
of this optimization problem.
Just like Proposition \ref{prop:mldegree},
this was found using computations with {\tt maple}.

\begin{prop} \label{prop:mldstar}
The maximum likelihood degree
of the Brownian motion star tree model with $\,n=2,3,4,5,6,7,8,9\,$ is equal to
$\,\delta_n= 1, 7, 21, 51, 113, 239, 493, 1003$.
\end{prop}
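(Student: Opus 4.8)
As indicated in the statement, Proposition~\ref{prop:mldstar} is established by computer algebra, just like Proposition~\ref{prop:mldegree}; so I describe the set-up and the main computational hurdle rather than a closed-form argument. When $\tilde T$ is the star tree on $n+1$ leaves, Theorem~\ref{thm:main} identifies $\cL_T^{-1}$ with the toric variety $\mathcal{V}(I_{\tilde T})$ of the second hypersimplex, and $\cL_T$ itself is the $(n+1)$-dimensional linear space of symmetric matrices with all off-diagonal entries equal. Setting all interior parameters of the monomial map (\ref{eq:toricpara}) equal --- as in Example~\ref{ex:revisit4} --- gives the birational parametrization $p_{0i}=u_i$ for $1\le i\le n$ and $p_{ij}=v\,u_iu_j$ for $1\le i<j\le n$, with $(u_1,\dots,u_n,v)\in\RR^{n+1}$. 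Substituting into the reduced Laplacian (\ref{eq:pcoord}) produces an $n\times n$ matrix $K(u,v)$ with polynomial entries. Since this parametrization is birational onto $\cL_T^{-1}$ and the log-likelihood is rational, the maximum likelihood degree $\delta_n$ is the number of complex critical points, for generic $S\in\mathbb{S}^n$, of $(u,v)\mapsto\log\det K(u,v)-{\rm trace}\bigl(S\,K(u,v)\bigr)$ on $\RR^{n+1}$.

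First I would write down the likelihood equations. By Jacobi's formula, $\partial\log\det K/\partial u_k={\rm trace}(K^{-1}\,\partial K/\partial u_k)$, so the equations read ${\rm trace}\bigl((K^{-1}-S)\,\partial K/\partial u_k\bigr)=0$ for $k=1,\dots,n$, and likewise in $v$. Clearing the common denominator $\det K$ --- equivalently, replacing $K^{-1}$ by the adjugate ${\rm adj}\,K$ --- turns these into $n+1$ polynomial equations in the $n+1$ unknowns. Let $J$ be the ideal they generate, saturated with respect to $\det K$ so as to discard the spurious component on which $K$ is singular. For generic $S$ the ideal $J$ is zero-dimensional and radical, with $\delta_n=\dim_{\mathbb C}\mathbb C[u,v]/J$. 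I would then fix an explicit symmetric positive-definite integer matrix $S$ --- a random one is generic with probability one, and the matrix in (\ref{eq:dataS}) already works for $n=4$, reproducing $\delta_4=21$ from Example~\ref{ex:revisit4} --- compute a Gr\"obner basis of $J$ in a degree order, and read off $\dim_{\mathbb C}\mathbb C[u,v]/J$. For $n\le 7$ this is feasible over $\Q$. For $n=8,9$ I would instead run the Gr\"obner basis computation modulo a large prime $q$, which returns the characteristic-zero count provided $q$ avoids the finitely many ``unlucky'' primes --- a condition I would certify by repeating the computation for two or three primes and checking that the answers agree. Carried out, this reproduces $\delta_n=1,7,21,51,113,239,493,1003$ for $n=2,\dots,9$.

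The hard part is purely computational: for $n=8,9$ the ideal $J$ lives in $9$ or $10$ variables and has large degree (near $500$ for $n=8$, over $1000$ for $n=9$), so a characteristic-zero Gr\"obner basis is out of reach and the finite-field reduction --- together with a genericity check on $S$ --- is essential. A subtle point is that the star model carries the symmetric-group symmetry permuting the leaves $1,\dots,n$, so a symmetric choice of $S$ could create positive-dimensional critical loci; an asymmetric $S$ must be used.

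\begin{rem}
The values in Proposition~\ref{prop:mldstar} satisfy the closed form $\delta_n=2^{\,n+1}-2n-3$ and the recursion $\delta_n-\delta_{n-1}=2^n-2$; equivalently $\delta_n=2\deg\mathcal{V}(I_{\tilde T})-1$, since the degree of the second-hypersimplex toric variety is the Eulerian number $2^n-n-1$ (which equals $11$ for $n=4$, as in Example~\ref{ex:M2}). We do not know a conceptual proof of these identities valid for all $n$.
\end{rem}
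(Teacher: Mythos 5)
Your proposal takes essentially the same route as the paper: the paper proves this proposition exactly as it does Proposition~\ref{prop:mldegree}, namely by forming the likelihood equations for the star-tree model and counting their complex solutions for generic data via Gr\"obner basis computations in {\tt maple}, with finite-field arithmetic for the larger values of $n$, which is precisely the computation you describe. Your added closed form $\delta_n=2^{\,n+1}-2n-3$ is consistent with the recursion $\delta_{n+1}=2(\delta_n+n)+1$ conjectured in the paper, so that remark is a compatible (and likewise unproven) observation rather than a discrepancy.
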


It is natural to conjecture that this degree  always satisfies
$\,\delta_{n+1} = 2(\delta_n+n)+1$.

\begin{rem} The estimated matrix $\widehat \Sigma$
in Example \ref{ex:revisit4} lies in the spectrahedron
$\mathcal{L}_{T} \cap \mathbb{S}^4_+$. It is {\bf not}
 in the model $\mathcal{L}_{T,\geq}$ for the star tree $\tilde T$
 because the upper left entry is
smaller than the off-diagonal entry in the first row.
This discrepancy  motivates studying
the inequalities in Theorem~\ref{thm:mainsemi}, whose proof is given in
the next ~section.
\end{rem}

\section{Being on trek in semialgebraic statistics}\label{sec:semialg}

Our goal is to prove that the inequalities  in Theorem~\ref{thm:mainsemi} are
valid for our model. Namely, we show that
 $p_{ij}$ and $p_{ij} p_{kl} - p_{il} p_{jk}$ are nonnegative on  $\mathcal{L}_{T,\geq}$.
This is done by applying the theory
of treks due to Sullivant, Talaska and Draisma \cite{DST0}.

A symmetric $n \times n$ matrix
$K$ is an {\em M-matrix} if $K$ is positive definite and $\kappa_{ij}\leq 0$ for all $i\neq j$. Moreover, $K$ is {\em diagonally dominant} if 
$|\kappa_{ii}|\geq \sum_{j\neq i}|\kappa_{ij}|$ for all $i$. If $K$ is an 
M-matrix then it is diagonally dominant if and only if the vector $K\mathbf 1$ 
has nonnegative entries. Therefore, a matrix $K=[\kappa_{ij}]$ is 
a diagonally dominant M-matrix if   and only $K \in \mathbb{S}^n_+$ and the
quantities $p_{ij}=-\kappa_{ij}$ and $p_{0i}=\sum_{j=1}^n \kappa_{ij}$ in (\ref{eq:pcoord})
are nonnegative.

It is known in linear algebra \cite[Theorem 2.2]{varga1993symmetric}
 that the inverse of any symmetric ultrametric matrix is a
diagonally dominant M-matrix. This explains why all points in
$\mathcal L_{T,\geq}^{-1}$  have  nonnegative coordinates.
This constraint is the first in (\ref{eq:constraints}).
The validity of the other inequality constraints
arises from the following key lemma.

\begin{lem} \label{lem:key} The determinant
 ${\rm det}(\Sigma)$ times
the quantity $\,p_{ij}p_{kl} - p_{il}p_{jk}\,$ in (\ref{eq:constraints})
is a sum of products of parameters $\theta_i$, so it is nonnegative
when the $\theta_i$ are nonnegative.
\end{lem}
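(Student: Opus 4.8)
The plan is to express $p_{ij}p_{kl} - p_{il}p_{jk}$ directly in terms of the trek-sum formulas developed by Sullivant, Talaska and Draisma \cite{DST0}, exploiting the fact that in coordinates (\ref{eq:pcoord}) the matrix $K$ is the reduced Laplacian of the complete graph on $n{+}1$ vertices. A convenient reformulation is to homogenize, replacing $\Sigma^{-1}$ by the adjoint $\widehat K = \det(\Sigma)\cdot \Sigma^{-1}$, so that $\det(\Sigma)\cdot p_{ij}$ and $\det(\Sigma)\cdot p_{0i}$ become the homogeneous polynomials $P_{ij}(t)$, $P_{0i}(t)$ already studied in the proof of Theorem~\ref{thm:main}. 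In that notation the quantity to be analyzed is $P_{ij}(t)P_{kl}(t) - P_{il}(t)P_{jk}(t)$ divided by $\det(\Sigma)$; equivalently, one multiplies the claimed nonnegative quantity by $\det(\Sigma)^2$ and divides by $\det(\Sigma)$, which is legitimate since $\det(\Sigma)>0$ on $\mathcal L_{T,\geq}$ (by the Corollary, $\mathcal L_{T,\geq}\subset \mathbb S^n_+$).

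The first step is to recall the explicit factorizations from the proof of Theorem~\ref{thm:main}: $P_{ij}(t) = D_{{\rm lca}(i,j)}(t)\cdot\prod E_{uv}(t)$ over the interior path vertices $u\neq{\rm lca}(i,j)$ between leaves $i$ and $j$, and similarly $P_{0i}(t)$ is a product of $E_{uv}(t)$'s along the path from $0$ to $i$. Then I would examine the four paths involved in a quartet $i,j,k,l$, using the tree-combinatorial fact that on the quartet one pair, say $\{i,j\}$ and $\{k,l\}$ (or $\{i,k\}$ and $\{j,l\}$, etc.), forms the ``long'' pairing whose paths share the internal edge of the quartet. Pairing up the factorizations of $P_{ij}P_{kl}$ and $P_{il}P_{jk}$, most of the $E_{uv}$ and $D_\bullet$ factors are common, and after cancelling the common factor the difference reduces to an expression of the form (common positive monomial in the $E$'s and $D$'s) times (a small determinantal difference localized near ${\rm lca}$ vertices of the quartet). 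The task then becomes showing this localized factor is a polynomial with nonnegative coefficients in the $\theta_v$.

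The cleanest way to control that localized factor, and the step I expect to be the crux, is to invoke the trek rule of \cite{DST0}: each entry of the (adjusted) concentration matrix, hence each $p_{ij}$, has an interpretation as a signed sum over treks (pairs of directed paths with a common source) in the underlying DAG, and on a tree these treks are highly constrained. A quadratic expression $p_{ij}p_{kl}-p_{il}p_{jk}$ then corresponds to a sum over pairs of treks, and the ``Lindström--Gessel--Viennot''-type cancellation between crossing and non-crossing pairs of trek-systems leaves exactly the non-crossing configurations, whose weights are manifestly products of the $\theta_v$'s. Concretely, I would: (i) write each $p_{\bullet\bullet}$ via the trek/path-weight formula, identifying the weight of an edge $u\to v$ with $\theta_v$-type quantities; (ii) expand the product $p_{ij}p_{kl}-p_{il}p_{jk}$ as a difference of two trek-pair sums; (iii) set up a sign-reversing involution on the trek-pairs that ``cross'' at the internal quartet edge, so those terms cancel in the difference; (iv) observe that the surviving terms — the non-crossing trek-pairs compatible with the quartet topology $ij|kl$ — all carry positive weight, each being a monomial (or a sum of monomials) in the $\theta_v$. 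Multiplying back by $\det(\Sigma)$ and noting $\det(\Sigma)>0$ gives nonnegativity of $p_{ij}p_{kl}-p_{il}p_{jk}$ on $\mathcal L_{T,\geq}$.

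The main obstacle is bookkeeping the signs in the trek expansion correctly — the reduced-Laplacian normalization introduces signs via matrix-tree-type formulas, and one must verify that the involution in step (iii) is genuinely sign-reversing and fixed-point-free on exactly the crossing configurations, so that the difference of the two sums is a subtraction-free polynomial. I would handle this by first treating the binary-tree case (as in the proof of Theorem~\ref{thm:main}), where the quartet topology pins down which pairing is the long one and the trek-systems are minimal, and then reducing the general case via the refinement identity: any non-binary $\tilde T$ is an intersection of binary refinements, and a quantity $p_{ij}p_{kl}-p_{il}p_{jk}$ that is a nonnegative $\theta$-polynomial on each refinement remains so on $\mathcal L_{T,\geq}$ by restricting parameters (setting the extra $\theta_v$'s to zero). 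The running example (Example~\ref{ex:5_15}) already exhibits all the $P_{ij}P_{kl}-P_{il}P_{jk}$ differences as explicit sums of $\theta$-monomials, which serves as a useful sanity check on the signs.
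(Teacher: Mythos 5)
Your route is the paper's own: clear denominators, expand each coordinate as a sum over trek systems in the sense of \cite{DST0}, and prove the inequality by a combinatorial matching of terms; the preliminary factorization of the $P_{ij}(t)$ into the determinants $D_\bullet$ and $E_{uv}$ from the proof of Theorem~\ref{thm:main} plays no role in this and can be dropped. Two steps of the plan, however, are genuine gaps as written. First, the trek rule gives cofactor expansions for the entries $\kappa_{ij}$, hence for $p_{ij}=-\kappa_{ij}$ with $i,j\geq 1$, but the coordinates $p_{0i}=\sum_{j}\kappa_{ij}$ are row sums, not cofactors, and quartets containing the leaf $0$ do occur in (\ref{eq:constraints}). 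A subtraction-free trek-system expansion of $p_{0i}\det\Sigma$ requires a separate cancellation argument (in the paper: each trek system from $j$ to $k$ is folded into one from $j$ to $j$ of equal weight, and all terms involving ancestors of $j$ cancel, leaving exactly the systems in $\mathcal{T}_{0,j}$); your step (i) silently assumes this.

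Second, step (iii) misidentifies the mechanism. After the expansion of Proposition~\ref{prop:pijtheta}, both $p_{ij}p_{kl}(\det\Sigma)^2$ and $p_{il}p_{jk}(\det\Sigma)^2$ are already subtraction-free, so there are no signs to reverse and no Lindstr\"om--Gessel--Viennot cancellation inside a single signed sum. What is needed is a weight-preserving \emph{injection} from the pairs of trek systems indexing the subtracted product into those indexing $p_{ij}p_{kl}$. Constructing it is the crux: the quartet topology $ij|kl$ forces the top of one of the two long treks to lie above the leaf of the opposite pairing, so its top half can be rerouted to produce treks $i\to j$ and $k\to l$ of the same weight; but the rerouted systems are no longer vertex-disjoint, and one must exchange, between the two systems, the sub-trek-systems lying in the blocks (of the forest obtained by deleting the central path of the quartet) that contain $j$ and $k$. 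Injectivity then rests on the uniqueness of monomials per vertex-disjoint trek system, the second assertion of Proposition~\ref{prop:pijtheta}. Your tail-swap instinct is the right one, but without this explicit rerouting-plus-block-swap and the uniqueness statement, the asserted cancellation does not follow. Your reduction of non-binary trees to binary refinements by setting extra $\theta_v$ to zero is valid, though unnecessary here, since the block argument works for arbitrary trees.
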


The proof of this lemma is given below.
The case $n=4$ was seen in Example~\ref{ex:5_15}. To provide some intuition, we now prove 
Lemma~\ref{lem:key} and Theorem~\ref{thm:mainsemi} for 
$n\leq 3$.
 
 \begin{example}[$n \leq 3$]
 \label{ex:mainsemismall}
 Let $n=2$. There are no constraints in (\ref{eq:constraints}), and we have
 $$	\begin{small} K \, =\, \Sigma^{-1}\;\;=\;\;\begin{bmatrix}
		\theta_1+\theta_3 & \theta_3\\
		\theta_3 & \theta_2+\theta_3
	\end{bmatrix}^{-1}\;\;=\;\;\frac{1}{\theta_1\theta_2+\theta_1\theta_3+\theta_2\theta_3}\begin{bmatrix}
		\theta_2+\theta_3 & -\theta_3\\
		-\theta_3 & \theta_1+\theta_3
	\end{bmatrix}. \end{small}
	$$
	Assuming that $K$ lies in $\mathbb{S}^2_+$, then the vector 
	$(p_{01},p_{02},p_{03})$ is nonnegative if and only if
$(\theta_1,\theta_2,\theta_3) = {\rm det}(\Sigma){\cdot} (p_{02},p_{01},p_{12})$ is nonnegative.
This proves Theorem~\ref{thm:mainsemi} for~$n=2$.

Let $n=3$ and $T$ be the binary tree with clade $\{1,2\}$. Theorem~\ref{thm:mainsemi}
asserts that the model
  $\mathcal L_{T,\geq}^{-1}$ is equal to the set of all diagonally dominant M-matrices satisfying
\begin{equation}\label{eq:extra3}
p_{01}p_{23}\;=\;p_{02}p_{13}\;\leq \;p_{03}p_{12}.	
\end{equation}
The former is contained in the latter because a direct calculation reveals that
\begin{equation}
	\begin{aligned}\label{eq:eqs3}
		p_{01}\det\Sigma=\theta_2\theta_3,\quad p_{02}\det\Sigma=\theta_1\theta_3,\quad p_{03}\det\Sigma=\theta_1\theta_2+\theta_1\theta_4+\theta_2\theta_4,\\
p_{12}\det\Sigma=\theta_3\theta_4+\theta_3\theta_5+\theta_4\theta_5,\quad p_{13}\det\Sigma=\theta_2\theta_5,\quad p_{23}\det\Sigma=\theta_1\theta_5, \\
(p_{03}p_{12}-p_{01}p_{23})\det\Sigma\;=\;\theta_4.
\end{aligned}
\end{equation}
Conversely, let $K$ be a diagonally dominant M-matrix satisfying (\ref{eq:extra3}). By Theorem~\ref{thm:main}, the equation in (\ref{eq:extra3}) implies that $K\in \mathcal L_T^{-1}$. Since $K$ is invertible, we can define $\Sigma=K^{-1}$.
Then $\Sigma=\Sigma_\theta$ for some real vector $(\theta_1,\ldots,\theta_5)$ 
that satisfies (\ref{eq:eqs3}).
From (\ref{eq:extra3}) we obtain $\theta_4\geq 0$. Nonnegativity of $p_{0i},p_{ij}$ implies that $\theta_1,\theta_2,\theta_3,\theta_5$ are either all
nonpositive or all nonnegative. We want to show that they are all nonnegative. Suppose  they are negative. Since $p_{03}\det \Sigma\geq 0$ and $p_{12}\det \Sigma\geq 0$, 
we~have
$$
\theta_4\;\leq\;-\frac{\theta_1\theta_2}{\theta_1+\theta_2},\qquad \theta_4\;\leq\;-\frac{\theta_3\theta_5}{\theta_3+\theta_5}.
$$
However,  $\,
\det \Sigma\;=\;\theta_4(\theta_1+\theta_2)(\theta_3+\theta_5)+\theta_1\theta_2(\theta_3+\theta_5)+(\theta_1+\theta_2)\theta_3\theta_5\;> \; 0\,$
and so 
$$
\theta_4\;>\;-\frac{\theta_1\theta_2}{\theta_1+\theta_2}-\frac{\theta_3\theta_5}{\theta_3+\theta_5}.
$$
This is a contradiction and hence Theorem~\ref{thm:mainsemi} holds for $n \leq 3$.~\qed
 \end{example}

Fix the tree $T$ with $n$ leaves as before.
A \emph{trek} from leaf $i$ to leaf $j$ is a pair $\gamma=(P_L,P_R)$, where $P_L$ is a directed path from some vertex $v$ to $i$ and $P_R$ is a directed path from $v$ to $j$. The leaf $i$ is the {\em initial vertex}, the leaf $j$ is the {\em final vertex}, and 
$v=v(\gamma)$ is the {\em top} of the trek. The parameter
 $\theta_{v(\gamma)}$ is the  weight of the trek.
We also allow treks between $i=0$ and $j$, in which case the associated weight is $1$. Given two sets $A$ and $B$ with the same cardinality, a \emph{trek system} $\Gamma$ from $A$ to $B$ consists of $|A|$ treks whose initial vertices exhaust the set $A$ and whose final vertices exhaust the set $B$. 
The weight of a trek system is the product of the weights of all its treks.

In our application either $A=\{1,\ldots,n\}\backslash \{j\}$ and $B=\{1,\ldots,n\}\backslash \{i\}$ if $1\leq i\leq j\leq n$, or $A=\{0,1,\ldots,n\}\backslash \{j\}$ and $B=\{1,\ldots,n\}$ if $0=i<j\leq n$. 
We assume that all treks are mutually vertex-disjoint. Equivalently, we consider
the set $\mathcal T_{i,j}$ of trek systems $\Gamma$ from $A$ to $B$ that consist of the following $|A|$ vertex-disjoint treks:
\begin{enumerate}
		\item[(i)] one trek from $i$ to $j$,
		\item[(ii)] $|A\cap B|$ treks from $k$ to $k$ for each $k\in A\cap B$.
\end{enumerate} 
In Figure~\ref{fig:4treks} we display the set $\mathcal{T}_{1,2}$ for 
our running example. Its elements are the
eight trek systems from $A=\{1,3,4\}$ to $B=\{2,3,4\}$ in the tree $T$
shown in Figure~\ref{fig:running0}.

\begin{figure}[b]
	\includegraphics[scale=.86]{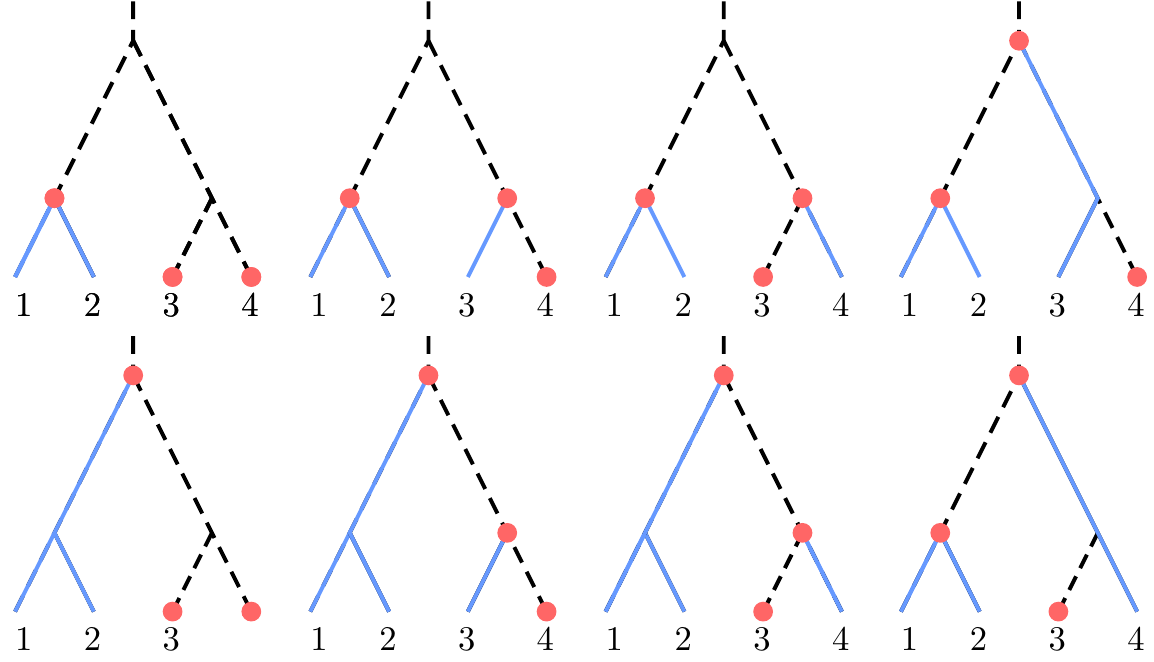}
	\caption{Eight trek systems from $A=\{1,3,4\}$ to $B=\{2,3,4\}$. Treks are indicated by solid edges. Dots mark the tops of the treks.}\label{fig:4treks}
\end{figure}

 \begin{prop}\label{prop:pijtheta}
 The following identity holds for all	indices $\,0\leq i<j\leq n$:
$$
p_{ij}\det \Sigma\;=\;\sum_{\Gamma\in \mathcal T_{i,j}} \prod_{\gamma\in \Gamma}\theta_{v(\gamma)}
$$
Moreover, each monomial appears in this sum only once.
\end{prop}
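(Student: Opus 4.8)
The plan is to prove Proposition~\ref{prop:pijtheta} by the Lindström--Gessel--Viennot--type trek rule of Sullivant--Talaska--Draisma~\cite{DST0}, applied to the structural equation model~(\ref{eq:BMT}). First I would recall the general principle: if $\Sigma = (I-\Lambda)^{-T}\Omega(I-\Lambda)^{-1}$ is the covariance of a Gaussian graphical model with directed path weights and error variances $\Omega$, then for any index sets $A,B$ of equal size, the minor $\det(\Sigma_{A,B})$ equals the signed sum over systems of vertex-disjoint treks from $A$ to $B$ of the product of trek weights; for trees with only positive path weights (here all edge weights are $1$ by~(\ref{eq:BMT})), all signs are $+1$, so one gets a genuinely positive (in the sense of: monomials with coefficient $+1$) expansion. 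The work is then purely bookkeeping: identify which minor of $\Sigma$ computes $p_{ij}\det\Sigma$, and check that the combinatorial objects produced by the trek rule are exactly the systems in $\mathcal{T}_{i,j}$.

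Concretely, I would split into the two cases already singled out in the text. For $1\le i<j\le n$: by the proof of Theorem~\ref{thm:main}, $p_{ij}\det\Sigma = \pm P_{ij}(t)$ is, up to sign, the $(n-1)\times(n-1)$ minor of $\Sigma$ obtained by deleting row $i$ and column $j$; that is, $p_{ij}\det\Sigma = (-1)^{?}\det(\Sigma_{A,B})$ with $A = \{1,\dots,n\}\setminus\{j\}$, $B=\{1,\dots,n\}\setminus\{i\}$, and the exact sign is pinned down by comparing with the cofactor expansion of $\Sigma^{-1}$ and with the initial-monomial computation $\mathrm{in}(P_{ij}(t)) = t_1\cdots t_n\cdot t_{\mathrm{lca}(i,j)}/(t_it_j)$. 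For $i=0$: from~(\ref{eq:pcoord}), $p_{0i}\det\Sigma = \sum_{j=1}^n\kappa_{ij}\det\Sigma = P_{0i}(t)$ is the determinant of the matrix obtained from $\Sigma$ by replacing its $i$th row with the all-ones vector, which one rewrites as the minor $\det(\Sigma')$ where $\Sigma'$ is the covariance of the enlarged model that includes the root variable $Y_0$ (whose covariances with the leaves are all equal and act like the constant column), matching $A = \{0,1,\dots,n\}\setminus\{i\}$, $B=\{1,\dots,n\}$. The trek rule applied to each of these minors produces exactly systems consisting of one trek from $i$ to $j$ together with self-treks $k\to k$ for all $k\in A\cap B$, vertex-disjoint — i.e.\ the set $\mathcal{T}_{i,j}$ — and the weight of such a system is $\prod_{\gamma}\theta_{v(\gamma)}$ since the only nontrivial weights come from the tops (all path weights being $1$), giving the claimed formula.

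For the ``each monomial appears only once'' clause, I would argue that distinct trek systems in $\mathcal{T}_{i,j}$ yield distinct monomials in $(\theta_v)_{v\in V}$. A self-trek $k\to k$ through top $v$ contributes $\theta_v$, and the trek from $i$ to $j$ with top $w=\mathrm{lca}$ of the two endpoints of its two legs contributes $\theta_w$; the multiset of tops of a vertex-disjoint system is therefore an antichain-like configuration, and on a tree this data is recoverable from the monomial because the vertex-disjointness forces the tops to lie on distinct root-to-leaf paths in a rigid way (the top of the $k\to k$ trek must be an ancestor of $k$, and disjointness with the other treks forces it to be the \emph{unique} highest such vertex not already used). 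I would make this precise by induction on the tree, peeling off a leaf, which is also how one would cleanly prove the expansion formula itself if one did not want to cite~\cite{DST0} as a black box.

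The main obstacle I anticipate is not the combinatorics of $\mathcal{T}_{i,j}$ but the careful matching of the two index-set cases to the trek rule — in particular, correctly handling the root $0$: the coordinate $p_{0i}$ is a \emph{sum} of entries of a row of $K$, and turning this into a single minor of a covariance matrix (so that the trek rule applies verbatim) requires either the ``all-ones row'' trick together with an identity expressing that determinant combinatorially, or passing to the $(n+1)\times(n+1)$ covariance matrix of $(Y_0,Y_1,\dots,Y_n)$ and checking that the resulting trek systems are precisely those in $\mathcal{T}_{0,j}$, where treks from $0$ are assigned weight $1$. Getting the signs uniformly $+1$ in both cases — which is what makes the formula manifestly a sum of monomials, as needed for Lemma~\ref{lem:key} — is the delicate point and is exactly where the tree structure (no directed cycles, unique paths) is essential.
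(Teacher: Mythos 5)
Your treatment of the off-diagonal case and of the uniqueness of monomials follows the paper's own route: identify $p_{ij}\det\Sigma$ (for $1\leq i<j\leq n$) with the minor $\det\Sigma_{A,B}$, $A=\{1,\dots,n\}\setminus\{j\}$, $B=\{1,\dots,n\}\setminus\{i\}$, pass to the full covariance $\Xi=(I-\Lambda)^{-T}D_\theta(I-\Lambda)^{-1}$ of the structural equation model (\ref{eq:BMT}), and invoke the trek rule of \cite{DST0,DST}, noting that on a tree the ``no sided intersection'' condition amounts to vertex-disjointness, and that distinctness of the resulting monomials follows because disjointness pins down the tops. One caveat: your assertion that all signs are automatically $+1$ because the edge weights are $1$ is not literally correct. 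The trek-rule expansion of $\det\Sigma_{A,B}$ is signed; what happens is that every system in $\mathcal T_{i,j}$ induces a permutation of sign $(-1)^{i+j+1}$, and positivity only appears after combining this with the cofactor sign $(-1)^{i+j}$ and the minus sign in $p_{ij}=-\kappa_{ij}$. You flag sign-tracking as the delicate point, so this is a fixable presentational slip rather than a structural flaw.

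The genuine gap is the case $i=0$, which you leave unresolved between two candidate routes, and the one you describe in most detail does not work as stated. You propose passing to the covariance matrix of $(Y_0,Y_1,\dots,Y_n)$ and reading $p_{0j}\det\Sigma$ off as a minor with row set $\{0,1,\dots,n\}\setminus\{j\}$; but in the model (\ref{eq:BMT}) one has $Y_0\equiv 0$, so the covariances of $Y_0$ with the leaves are all \emph{zero}, not a nonzero constant, the enlarged covariance matrix has a zero row, and the proposed minor vanishes identically. To repair this you would have to attach an auxiliary unit-variance error at the root, which changes the leaf block from $\Sigma$ to $\Sigma$ plus the all-ones matrix, and then argue by row reduction that the relevant minor still equals the determinant of $\Sigma$ with row $j$ replaced by the all-ones vector, while checking that the trek from $0$ indeed carries weight $1$ and that the surviving systems are exactly $\mathcal T_{0,j}$; none of this is supplied. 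The paper's proof of Proposition \ref{prop:pijtheta} avoids the issue by a direct cancellation argument: it expands $p_{0j}\det\Sigma=\kappa_{jj}\det\Sigma+\sum_{k\neq j}\kappa_{jk}\det\Sigma$ by the trek rule, maps each $\Gamma\in\mathcal T_{j,k}$ to a system in $\mathcal T_{j,j}$ of the same weight by replacing the trek $(P_L,P_R)$ from $j$ to $k$ with the self-trek $(P_R,P_R)$, cancels these against the $\mathcal T_{j,j}$ sum, and identifies the surviving monomials (those free of $\theta_v$ with $v\leq j$) with $\sum_{\Gamma\in\mathcal T_{0,j}}\prod_{\gamma\in\Gamma}\theta_{v(\gamma)}$. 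To complete your proof you need either this cancellation argument or a corrected version of the augmented-root construction.
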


\begin{proof}
We first prove the second assertion: each trek system in $\mathcal T_{i,j}$ gives a different monomial.
 Suppose there are two different trek systems $\Gamma$, $\Gamma'$ such that $\prod_{\gamma\in \Gamma}\theta_{v(\gamma)}=\prod_{\gamma\in \Gamma'}\theta_{v(\gamma)}$. Let $\gamma,\gamma'$ be two treks in $\Gamma,\Gamma'$ with the initial point $k$ and the final point~$l$ (either $k=l$ or $k=i$, $l=j$). Both $v(\gamma)$ and $v(\gamma')$ lie on the path between $0$ and ${\rm lca}(k,l)$. If $v(\gamma)\neq v(\gamma')$ then one lies above the other, say  $v(\gamma)< v(\gamma')$. But then $v(\gamma')$ lies on the trek of $\Gamma$ from $k$ to $l$ and so it cannot be at the top of another trek in $\Gamma$ because treks must be mutually disjoint. This leads to a contradiction unless $\gamma=\gamma'$. We conclude that~$\Gamma=\Gamma'$.
 
We now prove the main formula. Consider first the case $1\leq i \leq j\leq n$. 
We have
$$
\kappa_{ij} \det \Sigma\;=\;(-1)^{i+j}\det \Sigma_{A,B},
$$
where $A=\{1,\ldots,n\}\backslash \{j\}$ and $B=\{1,\ldots,n\}\backslash \{i\}$. Let $\Lambda=[\lambda_{uv}]\in \{0,1\}^{V\times V}$ be the matrix with 
$\lambda_{uv}=1$ if $u\to v$ in $T$ and $\lambda_{uv}=0$ otherwise, and
 let $D_\theta$ be the diagonal matrix with entries $\theta=(\theta_v)$.
  The covariance matrix of the model (\ref{eq:BMT}) equals
$$
\Xi\;\;=\;\;(I-\Lambda)^{-T}D_\theta(I-\Lambda)^{-1}.
$$ 
The principal submatrix of $\Xi$ corresponding to the leaves of $T$ is $\Sigma$. Hence $\det\Sigma_{A,B}=\det \Xi_{A,B}$. Every trek system between $A$ and $B$ gives rise to a permutation $\pi\in S_{n-1}$ and we define ${\rm sign}(\Gamma):={\rm sign}(\pi)$. If $\Gamma\in \mathcal T_{i,j}$ then ${\rm sign}(\pi)=(-1)^{i+j+1}$ unless $i=j$ in which case $\pi$ is the identity. Using equation (2) in \cite{DST},  we conclude that 
$$
\det \Sigma_{A,B}\;=\;\begin{cases}
\phantom{-} \sum_{\Gamma\in \mathcal T_{i,i}}\prod_{\gamma\in \Gamma}\theta_{v(\gamma)} & 
\mbox{if }i=j ,\\
	-\sum_{\Gamma\in \mathcal T_{i,j}}(-1)^{i+j}\prod_{\gamma\in \Gamma}\theta_{v(\gamma)} & \mbox{if } i\neq j.
\end{cases}
$$
The formula in \cite{DST} involves all trek systems between $A$ and $B$, but the sum can be restricted to trek systems with no sided intersections \cite[Definition 3.2]{DST}. In our case 
this is equivalent to treks being mutually vertex-disjoint. It follows that
$$
\kappa_{ij}\det \Sigma\;=\;\begin{cases} \phantom{-} 
\sum_{\Gamma\in \mathcal T_{i,i}} \prod_{\gamma\in \Gamma}\theta_{v(\gamma)} & \mbox{if }i=j , \\
-\sum_{\Gamma\in \mathcal T_{i,j}} \prod_{\gamma\in \Gamma}\theta_{v(\gamma)} & \mbox{if } i\neq j.\end{cases}
$$
This proves the desired formula for $p_{ij}\det \Sigma$ in the case when $i\neq 0$.

It remains to consider the case $i=0$. Here we have
$$
p_{0j}\det \Sigma\;=\;(\kappa_{jj}+\sum_{k\neq j} \kappa_{jk})\det \Sigma\;=\; \sum_{\Gamma\in \mathcal T_{j,j}} \prod_{\gamma\in \Gamma}\theta_{v(\gamma)} -\sum_{k\neq j}\sum_{\Gamma\in \mathcal T_{j,k}} \prod_{\gamma\in \Gamma}\theta_{v(\gamma)}.$$
We claim that this expression  equals $\sum_{\Gamma\in \mathcal T_{0,j}} \prod_{\gamma\in \Gamma}\theta_{v(\gamma)}$. For a fixed $k$, pick $\Gamma\in \mathcal T_{j,k}$ with associated monomial $\prod_{\gamma\in \Gamma}\theta_{v(\gamma)}$. 
Replace the trek $(P_L,P_R)$  from $j$ to $k$ in $\Gamma$
with the trek $(P_R,P_R)$ from $j$ to $j$. 
 The resulting  trek system $\tilde \Gamma\in \mathcal T_{j,j}$ has  the same weight. This shows that  the monomials in the second sum all appear in the first sum. Since each monomial appears at most once in a trek system, they mutually cancel each other out. The only terms of the first sum that remain are the ones not containing $\theta_v$ for $v\leq j$. These are precisely the monomials in $\,\sum_{\Gamma\in \mathcal T_{0,j}} \prod_{\gamma\in \Gamma}\theta_{v(\gamma)}$.
\end{proof}

\begin{example}
	Fix the tree in Figure~\ref{fig:running0}. Proposition~\ref{prop:pijtheta} 
	confirms the formula for $p_{12}\det \Sigma$ in Example~\ref{ex:5_15}.
	There are eight vertex-disjoint trek systems from $A=\{1,3,4\}$ to $B=\{2,3,4\}$ as shown in Figure~\ref{fig:4treks}. The trek systems in the first row have
	the weights $\theta_3\theta_4\theta_5$, $\theta_4\theta_5\theta_6$, $\theta_3\theta_5\theta_6$,  $\theta_4\theta_5\theta_7$. In the second row we get $\theta_3\theta_4\theta_7$, $\theta_4\theta_6\theta_7$, $\theta_3\theta_6\theta_7$, $\theta_3\theta_5\theta_7$. The sum of these eight monomials equals $p_{12}\det\Sigma$.
	\hfill \qed
\end{example}

We shall now prove the key lemma that was stated at the beginning of this section.

\begin{proof}[Proof of Lemma \ref{lem:key}] Let $ij|kl$ be a trivalent quartet
in $\tilde T$. Our goal is to show that $p_{ij}p_{kl}-p_{ik}p_{jl}$ is
 a sum of products of the parameters $\theta_v$. 
 Let $s=\det \Sigma$.
 By Proposition~\ref{prop:pijtheta}, we have
\begin{equation}\label{eq:binposiaux}
(p_{ij}p_{kl}-p_{ik}p_{jl})s^2  = \sum_{\Gamma\in \mathcal T_{i,j}} \sum_{\Gamma'\in \mathcal T_{k,l}} \prod_{\gamma\in \Gamma}\theta_{v(\gamma)}
\prod_{\gamma\in \Gamma'}\! \theta_{v(\gamma)}-\sum_{\Gamma\in \mathcal T_{i,k}} \sum_{\Gamma'\in \mathcal T_{j,l}} \prod_{\gamma\in \Gamma}\theta_{v(\gamma)}
\prod_{\gamma\in \Gamma'}\! \theta_{v(\gamma)}.	 \!\!
\end{equation}
It suffices to show that each term in the right sum lies also in the left sum. Fix a pair $\Gamma_{ik}\in \mathcal T_{i,k}$, $\Gamma_{jl}\in \mathcal T_{j,l}$.  We will construct  trek systems $\Gamma_{ij}\in \mathcal T_{i,j}$, $\Gamma_{kl}\in \mathcal T_{k,l}$ such~that 
\begin{equation}\label{eq:auxbinom}
\prod_{\gamma\in \Gamma_{ik}}\theta_{v(\gamma)}
\prod_{\gamma\in \Gamma_{jl}}\theta_{v(\gamma)}\;\;=\;\;\prod_{\gamma\in \Gamma_{ij}}\theta_{v(\gamma)}
\prod_{\gamma\in \Gamma_{kl}}\theta_{v(\gamma)}.	
\end{equation}
The idea of the construction is shown in  Figure~\ref{fig:ineqthm}.

Since $ij|kl$ is a trivalent quartet in $\tilde T$, either $v(\gamma_{ik})\leq j$ or $v(\gamma_{jl})\leq i$.
Otherwise the paths $\overline{ik}$ and $\overline{jl}$ do not intersect. Similarly, either $v(\gamma_{ik})\leq l$ or $v(\gamma_{jl})\leq k$. Without loss of generality, we 
consider the case $v(\gamma_{ik})\leq j$ and $v(\gamma_{ik})\leq l$.
(The proof is similar for the other three cases). Replace $P_R$ in $\gamma_{ik}=(P_L,P_R)$ with a path from $v(\gamma_{ik})$ to $j$ to obtain trek $\gamma_{ij}$ from $i$ to $j$. Replace $P_L'$ in $\gamma_{jl}=(P_L',P_R')$ with a path from $v(\gamma_{jl})$ to $k$ to obtain trek $\gamma_{kl}$ from $k$ to $l$. 
The quartet $ij|kl$ has two inner vertices $u,v$. 
Removing the path $\overline{uv}$ between $u$ and $v$ in $T$ together with all incident edges induces a split of $T$ into $\geq 4$ blocks. 
Since $\overline{uv}$ appears in $\gamma_{ik}$ and $\gamma_{jl}$, it cannot be a part of any other trek in $ \Gamma_{ik},  \Gamma_{jl}$. Therefore, all treks in both trek systems (apart from $\gamma_{ik}, \gamma_{jl}$) are entirely contained in one of the $\geq 4$ blocks. Denote the blocks containing $j,k$ by $A_j, A_k$, respectively. Let $\Gamma_{ij}$ be the trek system obtained from $\Gamma_{ik}$ by replacing $\gamma_{ik}$ with $\gamma_{ij}$ and all treks in $A_j\cup A_k$ with the treks of $\Gamma_{jl}$ contained in $A_j\cup A_k$. Similarly, let $\Gamma_{kl}$ be the trek system obtained from $\Gamma_{jl}$ by replacing $\gamma_{jl}$ with $\gamma_{kl}$ and all treks in $A_j\cup A_k$ with the treks of $\Gamma_{ik}$ contained in $A_j\cup A_k$.

\begin{figure}[t]
	\includegraphics[scale=.7]{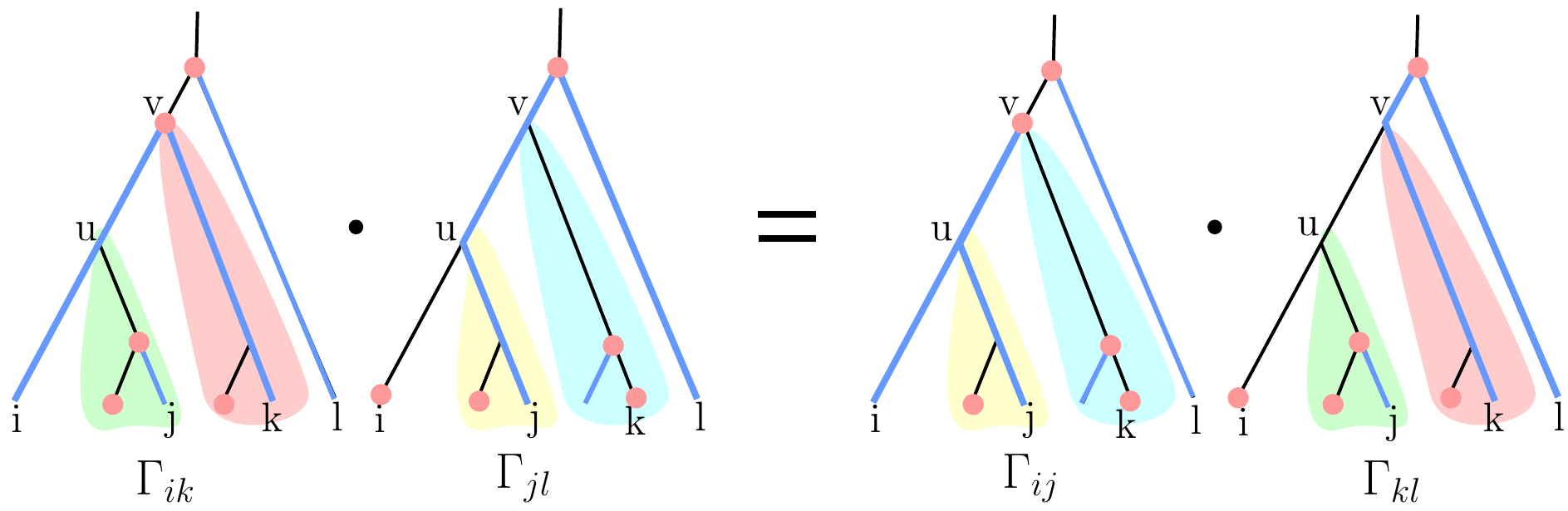}
	\vspace{-0.12in}
	\caption{Illustration of the construction for (\ref{eq:auxbinom}) in the proof of Lemma~\ref{lem:key}.}\label{fig:ineqthm}
\end{figure}

By construction, the power of $\theta_v$ coincides on both sides of (\ref{eq:auxbinom}) and so the corresponding terms in (\ref{eq:binposiaux}) will cancel out. What is left  is a sum of weights of trek systems, 
and hence  a sum of products of parameters $\theta_i$.
\end{proof}

\begin{rem}
A similar construction, also based on Proposition~\ref{prop:pijtheta},
can be used to show that the terms in  $p_{ik}p_{jl} s^2$ are
precisely equal to the terms in $p_{il}p_{jk}s^2 $.
This gives an alternative proof of the equations in  (\ref{eq:constraints})
and hence of Theorem \ref{thm:main}.
\end{rem}


We now prove the semialgebraic characterization
of Brownian motion tree models.

\begin{proof}[Proof of Theorem \ref{thm:mainsemi}]
We first claim that it suffices to show the result for binary trees.
Indeed, just like in (\ref{eq:identityideals}), non-binary models
are intersections of binary models:
\begin{equation}
 \mathcal{L}_{T,\geq}^{-1} \quad = \quad \bigcap_{\tilde U \in [\tilde T]}
\mathcal{L}_{U,\geq}^{-1} 
\end{equation}
Moreover, the inequalities for $T$ in (\ref{eq:constraints})
are those for binary $U$, as $\tilde U$ runs over $[\tilde T]$. Hence we can  assume that $T$ is binary. Suppose that $K\in \mathcal L_{T,\geq}^{-1}$.
By \cite[Theorem 2.2]{varga1993symmetric}, we know that
$K$ is positive definite and $p_{ij}\geq 0$ for all $0 \leq i<j \leq n$.
Theorem~\ref{thm:main} shows that $K$ satisfies the equalities in (\ref{eq:constraints}).
In  Lemma \ref{lem:key} we saw that the inequalities in (\ref{eq:constraints}) 
hold for $K$. Hence all constraints in (\ref{eq:constraints}) are satisfied for~$K$.
 
For the converse, let $K \in \mathbb{S}^n_+$ satisfy (\ref{eq:constraints}). By Theorem~\ref{thm:main}, the equations in (\ref{eq:constraints}) imply that $K\in \mathcal L_T^{-1}$. Since $K$ is invertible we can define $\Sigma=K^{-1}$ and $\Sigma=\Sigma_\theta$ for some real vector $\theta$. To complete the proof, we must show that $\theta$ is  nonnegative.

For any subset $A\subset \{1,\ldots,n\}$ denote by $T_A$ the tree whose vertices $V_A$ are 
 ${\rm lca}(i,j)$ for $i,j\in A$. There is a directed edge
  $u\to v$ in $T_A$ if there is a directed path from $u$ to $v$ in $T$ containing
   no other vertices of $T_A$. As before, we attach an 
   extra vertex $0$ to the root. Moreover, if $T$ has
   edge weights $\theta_v$ for $v\in V$ then $T_A$ has edge weights
\begin{equation}\label{eq:tildeth}
\tilde \theta_v\;=\;\sum_{u<w\leq v}\theta_w\qquad\mbox{ for }u\to v \,\,\mbox{ in }T_A.	
\end{equation}

For example, if $T$ is the tree in Figure~\ref{fig:running0}  and $A=\{1,2,3\}$ 
	then $T_A$ has vertices $V_A=\{1,2,3,5,7\}$ and edges 
	$0 {\to} 7$, $7 {\to} 3$, $7{\to} 5$, $5{\to} 1$, $5{\to} 2$. The weights of $T_A$~are
	$$
	\tilde \theta_1=\theta_1,\quad \tilde \theta_2=\theta_2,\quad \tilde \theta_3=\theta_3+\theta_6,\quad\tilde \theta_5=\theta_5,\quad \tilde \theta_7=\theta_7.
	$$

If $\Sigma $ lies in the subspace $ \mathcal{L}_{T} $ of $ \mathbb{S}^n$,
 with weights $\theta_w \in \mathbb R$, then its
principal submatrix $\Sigma_{A,A}$ lies in the subspace $\mathcal L_{T_A}$
of $ \mathbb{S}^{|A|}$. Indeed, the entries of  $\Sigma_{A,A}$~are
$$ \sigma_{ij}\;\, =\,\sum_{v\leq {\rm lca}(i,j)}\theta_v \quad \; =
\sum_{v\in V_A:\;v\leq {\rm lca}(i,j)}\!\!\!\tilde\theta_v.$$
	In other words, $\Sigma_{A,A}$ can be written as a matrix in $\mathcal L_{T_A}$ 
	with edge weights (\ref{eq:tildeth}).

As the main step in the proof, we will now show	
	that the constraints in (\ref{eq:constraints}) 
	behave nicely with respect to marginalization to the subtree
	induced on the subset $A$. Namely, we claim that
$\tilde K=(\Sigma_{A,A})^{-1}$ is a diagonally dominant M-matrix satisfying 
	\begin{equation} \label{eq:claim}
	\tilde p_{ik}\tilde p_{jl}\;=\;\tilde p_{il}\tilde p_{jk}\;\leq\; 
	\tilde p_{ij}\tilde p_{kl}
\end{equation}
	for all $i,j,k,l\in A\cup \{0\}$ such that the paths $\overline{ij}$, $\overline{kl}$ in $\tilde T_A$ have no edges in common.

The fact that $\tilde K = (\Sigma_{A,A})^{-1}$ is a diagonally dominant M-matrix follows directly from \cite[Corollary 2]{carlson1979schur}. To show the second part of the claim, we
shall assume $|A|=n-1$, say $A=\{1,\ldots,n-1\}$. The general case will then follow by
induction.

  If $K=\Sigma^{-1}$ then
    $\tilde K=K_{A,A}-\frac{1}{\kappa_{nn}}K_{A,n}K_{n,A}$,
    by taking the Schur complement.~Hence
	$$ \begin{matrix}
	\tilde p_{ij} & = & p_{ij}+\frac{1}{\kappa_{nn}}p_{in}p_{jn}
	&&& \hbox{for all $i,j \in A$} , \smallskip \\
	\tilde p_{0i} & = & p_{0i}+p_{in}-\frac{1}{\kappa_{nn}}p_{in}\sum_{j=1}^{n-1}p_{jn}
	& =& p_{0i}+\frac{1}{\kappa_{nn}}p_{0n}p_{in} & 
	\hbox{for $i \in A$}.
\end{matrix}	$$
For the quartet $ij|kl$ we conclude
	$$ \begin{matrix}
	\tilde p_{ik}\tilde p_{jl} & = & p_{ik}p_{jl}+\frac{1}{\kappa_{nn}}p_{ik}p_{jn}p_{ln}+\frac{1}{\kappa_{nn}}p_{in}p_{kn}p_{jl}+\frac{1}{\kappa_{nn}^2}p_{in}p_{jn}p_{kn}p_{ln},
	\smallskip \\
	\tilde p_{il}\tilde p_{jk} & = &p_{il}p_{jk}+\frac{1}{\kappa_{nn}}p_{il}p_{jn}p_{kn}+\frac{1}{\kappa_{nn}}p_{in}p_{ln}p_{jk}+\frac{1}{\kappa_{nn}^2}p_{in}p_{jn}p_{kn}p_{ln}.
\end{matrix}	$$
	By assumption, $p_{ik} p_{jl}=p_{il} p_{jk}$. We must show that the following
	expression is zero:
\begin{equation}\label{eq:auxquad}
		\tilde p_{ik}\tilde p_{jl}-\tilde p_{il}\tilde p_{jk}\;\;=\;\;\frac{1}{\kappa_{nn}}\left(p_{ik}p_{jn}p_{ln}+p_{in}p_{kn}p_{jl}-p_{il}p_{jn}p_{kn}-p_{in}p_{ln}p_{jk}\right).	
\end{equation}
Figure~\ref{fig:5cases} shows the five  cases of where $n$ can be located in $\tilde T$.
  First rewrite (\ref{eq:auxquad}) as
$$ \begin{matrix}
		\frac{1}{\kappa_{nn}}\left(p_{jn}(p_{ik}p_{ln}-p_{il}p_{kn})+p_{in}(p_{kn}p_{jl}-p_{ln}p_{jk})\right). \end{matrix}
$$
In the three cases in the top row of Figure~\ref{fig:5cases}, the paths $\overline{in}$ and $\overline{jn}$ do not intersect with the path $\overline{kl}$. This implies, by our assumption on $K$, that $p_{ik}p_{ln}-p_{il}p_{kn}=p_{kn}p_{jl}-p_{ln}p_{jk}=0$ and so (\ref{eq:auxquad}) is zero. For the remaining two cases we write (\ref{eq:auxquad})~as
$$ \begin{matrix}
		\frac{1}{\kappa_{nn}}\left(p_{ln}(p_{ik}p_{jn}-p_{in}p_{jk})+p_{kn}(p_{in}p_{jl}-p_{il}p_{jn})\right). \end{matrix}
$$
Since the path $\overline{ij}$ does not intersect the paths $\overline{kn}$ and $\overline{ln}$, 
we conclude the identities $\,p_{ik}p_{jn}-p_{in}p_{jk}=p_{in}p_{jl}-p_{il}p_{jn}=0$.
This again implies that (\ref{eq:auxquad}) is zero. 

	\begin{figure}[t]
		\includegraphics[scale=.74]{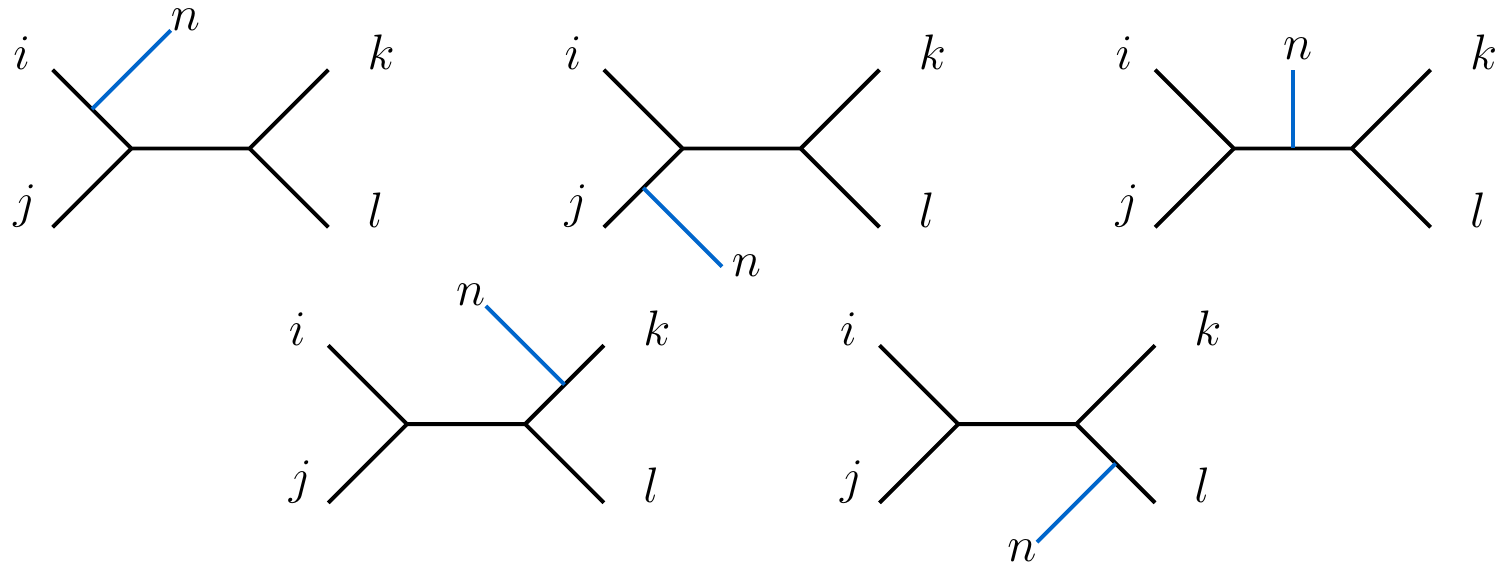} \vspace{-0.16in}
		\caption{The five cases for adding a leaf $n$ to the quartet $ij|kl$.}\label{fig:5cases}
	\end{figure}

It remains to show that $\tilde p_{il}\tilde p_{jk}\leq \tilde p_{ij}\tilde p_{kl}$. Similarly as above we obtain
	$$
	\tilde p_{il}\tilde p_{jk}- \tilde p_{ij}\tilde p_{kl}\;=\; (p_{il}p_{jk}- p_{ij} p_{kl})+\frac{1}{\kappa_{nn}}\left(p_{il}p_{jn}p_{kn}+p_{in}p_{ln}p_{jk}-p_{ij}p_{kn}p_{ln}-p_{in}p_{jn}p_{kl}\right).
	$$
	By assumption $p_{il}p_{jk}- p_{ij} p_{kl}\leq 0$.
	We will show that the second term,  denoted by $C$, is also nonpositive. 
Consider the five cases in Figure~\ref{fig:5cases} and write $C$ in two ways:
\begin{equation}\label{aux:C}
	\begin{matrix} C & = &	
	\frac{1}{\kappa_{nn}}\left(p_{kn}(p_{il}p_{jn}-p_{ij}p_{ln})+p_{in}(p_{ln}p_{jk}-p_{jn}p_{kl})\right)\\[.2cm]
& = & \frac{1}{\kappa_{nn}}\left(p_{jn}(p_{il}p_{kn}-p_{in}p_{kl})+p_{ln}(p_{in}p_{jk}-p_{ij}p_{kn})\right).	
	\end{matrix}	
\end{equation}	
	The following table shows the signs of the four relevant terms according to each case:
	\begin{center}
\begin{minipage}{0.5\textwidth}
	\begin{small} \begin{center}
\begin{tabular}{l|l|l|l|l|l}
  & 1 & 2 & 3 & 4 & 5 \\ \hline
$p_{il}p_{jn}-p_{ij}p_{ln}$ & $0$ & $+$ & $-$ & $-$ & $-$ \\
$p_{ln}p_{jk}-p_{jn}p_{kl}$ & $-$ & $-$ & $-$ & $0$ & $+$
\end{tabular}	
\end{center} \end{small}
\end{minipage}		\begin{minipage}{0.45\textwidth}
		\begin{small} \begin{center}
\begin{tabular}{l|l|l|l|l|l}
  & 1 & 2 & 3 & 4 & 5 \\ \hline
$p_{il}p_{kn}-p_{in}p_{kl}$ & $-$ & $-$ & $-$ & $+$ & $0$ \\
$p_{in}p_{jk}-p_{ij}p_{kn}$ & $+$ & $0$ & $-$ & $-$ & $-$
\end{tabular}	
\end{center} \end{small}
	\end{minipage}	\end{center}
Writing $C$ as in the first line of (\ref{aux:C}) implies nonpositivity in cases 1, 3, and 4. For the two remaining cases we use the second line. We conclude that $C\leq 0$ in all five cases. This completes the proof of the claim~ (\ref{eq:claim}).
 
We now finally show that $\theta_v\geq 0$ for every edge $u\to v$ in $T$.
Fix $A=\{i,j,k\}\subset \{1,\ldots,n\}$ such that $v={\rm lca}(i,j)$ and $k\in {\rm de}(u)\backslash {\rm de}(v)$ so that ${\rm lca}(i,k)={\rm lca}(j,k)=u$. Here we allow for $i=j$ if $v$ is a leaf and no $k$ if $u=0$. These two cases with $|A|=2$ will be considered separately; for now assume $|A|=3$. Consider the induced tree $T_A$. By construction, $u\to v$ is an edge of $T_A$. By the claim above, $\Sigma_{A,A}\in \mathcal L_{T_A}
$ is parameterized by $\tilde \theta$ with $\tilde \theta_v=\theta_v$.
Example~\ref{ex:mainsemismall} ensures that $\tilde \theta$ is a nonnegative vector; in particular $\tilde \theta_v=\theta_v\geq 0$. The case when $v$ is a leaf or when $u=0$ are similar, but here $|A|=2$, so we use the case $n=2$. This shows that, for any 
matrix $\Sigma$ satisfying the constraints (\ref{eq:constraints}), it follows
 that $\Sigma^{-1}\in \mathcal L_{T,\geq}^{-1}$. This completes the proof.
\end{proof}

Theorem~\ref{thm:mainsemi} offers a geometric understanding of
maximum likelihood estimation for Brownian motion tree models.
Given any sample covariance matrix $S$, the estimated concentration matrix $\hat K$ 
satisfies (\ref{eq:constraints}). If all inequalities are strict
for the estimates $\hat p_{ij}$ then we are in the situation of
 Section \ref{sec:mle}. Otherwise, we have $\hat p_{ij} = 0$ or
$\hat p_{il} \hat p_{jk} = \hat p_{ij} \hat p_{kl}$ for some choice
of indices in (\ref{eq:constraints}). This corresponds to
$\hat \Sigma = {\hat K}^{-1}$ lying on a proper face of the
simplicial cone $\mathcal{L}_{T,\geq}$. It is
interesting to record these faces.

\begin{example}[$n=4$] Fix the tree $T$ in Figure \ref{fig:running0}.
The following experiment was performed $1000$ times.
We fix the parameters $\,\theta_1=\cdots=\theta_7=1\,$
and the sample sizes $N=5$ and $N=20$.
We sample $N$ vectors from $\RR^4$ using the Gaussian
distribution $\Sigma_\theta$ and we
record the resulting sample covariance matrix $S$.
 In each case we compute
the MLE $\hat \Sigma$ using the standard function for constrained optimization in the statistical software {\tt R}.
For every iteration we check the KKT conditions to see whether the convergence criterion was met. In the affirmative case we identify the face of
the $7$-dimensional cone $\mathcal{L}_{T,\geq}$ that contains $\hat \Sigma$
in its relative interior. The following table shows the
empirical distribution of the codimension of the faces
that were found:
\begin{center}
\begin{tabular}{l|c|c|c|c|c}
codim & 0   & 1   & 2 & 3 & \textgreater{}3\\ \hline
$N=20$ & 816 & 183 & 1 & 0   &    0  \\ \hline    
$N=5$ & 487 &  374 & 119 & 20  & 0 \end{tabular}	
\end{center}
The numbers in the last column are zero because
 the faces of dimension less than four have empty 
 intersection with the cone of positive definite matrices.
 In the majority of the experiments, the MLE occurred in
 the interior of $\mathcal{L}_{T,\geq}$. In this case,
 the analysis in Example \ref{ref:galois5} applies:
  the MLE $\hat \Sigma$ has algebraic degree five
  over the data $S$. \hfill \qed
 \end{example}

Every face of the simplicial cone $\mathcal{L}_{T,\geq}$
has the form  $\mathcal{L}_{T',\geq}$,
where $T'$ is obtained from $T$ by contracting some edges. If
MLE $\hat \Sigma $ lies on that face, then
the algebraic complexity of the MLE is governed by the ML degree for $T'$.
This underscores the relevance of results like  Proposition \ref{prop:mldstar},
even if the tree $T$ of interest is not binary.

Theorem~\ref{thm:mainsemi} implies that the
 facial structure of the simplicial cone $\mathcal{L}_{T,\geq}$
translates into a stratification of the boundary of
 $\mathcal{L}_{T,\geq}^{-1}$. This enables a detailed
   geometric analysis of the MLE
 across all strata. We shall pursue this in a forthcoming paper.


\begin{thebibliography}{DMSM14}

\bibitem[And70]{andersonLinearCovariance}
Theodore~W. Anderson.
\newblock Estimation of covariance matrices which are linear combinations or
  whose inverses are linear combinations of given matrices.
\newblock In I.~M. Mahalanobis, P. C. Rao, C.~R. Bose, R.C.~Chakravarti and
  K.~J.~C. Smith, editors, {\em Essays in {P}robability and {S}tatistics},
  pages 1--24. Univ. of North Carolina Press, Chapel Hill, 1970.

\bibitem[BFF18]{lara}
Lara Bossinger, Xin Fang, Ghislain Fourier, Milena Hering and Martina Lanini.
\newblock Toric degenerations of Gr(2,n) and Gr(3,6) via plabic graphs.
\newblock {\em Annals of Combinatorics}  22(3): 491--512, 2018.

\bibitem[Bun71]{buneman1971}
Peter Buneman.
\newblock The recovery of trees from measures of dissimilarity.
\newblock In F.~Hodson et~al., editor, {\em Mathematics in the Archaeological
  and Historical Sciences}, pages 387--395. Edinburgh University Press, 1971.

\bibitem[CM79]{carlson1979schur}
David Carlson and Thomas~L Markham.
\newblock Schur complements of diagonally dominant matrices.
\newblock {\em Czechoslovak Mathematical Journal}, 29(2):246--251, 1979.


\bibitem[DMSM14]{dellacherie2014inverse}
Claude Dellacherie, Servet Martinez, and Jaime San~Martin.
\newblock {\em Inverse M-matrices and ultrametric matrices}, volume 2118.
\newblock Springer, 2014.


\bibitem[DST13]{DST}
Jan Draisma, Seth Sullivant, and Kelli Talaska.
\newblock Positivity for {G}aussian graphical models.
\newblock {\em Advances in Applied Mathematics}, 50(5):661--674, 2013.


\bibitem[Fel73]{felsenstein_maximum-likelihood_1973}
Joseph Felsenstein.
\newblock Maximum-likelihood estimation of evolutionary trees from continuous
  characters.
\newblock {\em American Journal of Human Genetics}, 25(5):471--492, September
  1973.

\bibitem[M2]{M2}
Daniel Grayson and Michael Stillman.
\newblock  Macaulay2, a software system for research in algebraic geometry.
\newblock Available at \url{http://www.math.uiuc.edu/Macaulay2/}.

\bibitem[KM19]{KM}
Kiumars Kaveh and Christopher Manon.
\newblock Khovanskii bases, higher rank valuations and tropical geometry.
\newblock {\em SIAM Journal on Applied Algebra and Geometry}, 2019.
        
\bibitem[MS15]{MS}
Diane Maclagan and Bernd Sturmfels.
\newblock {\em Introduction to Tropical Geometry}, volume 161 of {\em Graduate
  Studies in Mathematics}.
\newblock American Mathematical Society, Providence, 2015.

\bibitem[MSUZ16]{MSUZ}
Mateusz Micha{\l}ek, Bernd Sturmfels, Caroline Uhler, and Piotr Zwiernik.
\newblock Exponential varieties.
\newblock {\em Proceedings of the London Mathematical Society (3)}, 112(1):27--56, 2016.


\bibitem[MS04]{MoSt}
Vincent Moulton and Mike Steel.
\newblock Peeling phylogenetic 'oranges',
\newblock {\em Advances in Applied Mathematics},
33(4): 710-727, 2004.


\bibitem[STD10]{DST0}
Seth Sullivant, Kelli Talaska, and Jan Draisma.
\newblock Trek separation for {G}aussian graphical models.
\newblock {\em Annals of Statistics}, 38(3):1665--1685, 2010.


\bibitem[VN93]{varga1993symmetric}
Richard~S.~Varga and Reinhard Nabben.
\newblock On symmetric ultrametric matrices.
\newblock {\em Numerical Linear Algebra} (eds L.~Reichel et al.),
de Gruyter, New York,
 pp.~193--199, 1993.

\bibitem[ZUR17]{ZUR14}
Piotr Zwiernik, Caroline Uhler, and Donald Richards.
\newblock Maximum likelihood estimation for linear Gaussian covariance models.
\newblock {\em Journal of the Royal Statistical Society: Series B (Statistical
  Methodology)}, 79(4):1269--1292, 2017.

\end{thebibliography}
\end{document}